\title[Generic regularity of equilibrium measures]{Generic regularity of equilibrium measures\\ for the logarithmic potential with external fields}
\author{Giacomo Colombo and Alessio Figalli}\date{}
\address{ETH Z\"urich, Department of Mathematics, R\"amistrasse 101, 8092, Z\"urich, Switzerland}
 \email{giacomo.colombo@math.ethz.ch}  
 \email{alessio.figalli@math.ethz.ch}
\begin{document}

\begin{abstract}
A well-known conjecture in the theory of Coulomb gases and the mean-field limit of $\beta$-ensembles asserts that, generically, external potentials are ``off-critical'' (or equivalently, ``regular''). By exploiting the connection between minimizing measures and thin obstacle problems, we establish this conjecture.
\end{abstract}

\maketitle

\section{Introduction}
Given a potential $V:\R\to \R$, a central question arising both from the theory of Coulomb gases  and the mean-field limit of $\beta$-ensembles is to understand the behavior of probability measures $
\mu_V$ minimizing the energy
\begin{equation}\label{eq:energy}
    \mathcal{E}_V(\mu)\coloneqq\int_\R\int_\R\big(V(x)+V(y)-\log|x-y|\big)\,d\mu(x)\,d\mu(y)
\end{equation}
(see, for instance, \cite{saff_totik1997,serfaty2024lectures}).
The classical case $V(x)=x^2$, which comes from Gaussian Orthogonal Ensembles, was first studied in~\cite{wigner1958distribution}, and the minimizing probability is known as the ``semicircle law'', since its density $\frac{d\mu_V}{dx}=\frac2\pi\sqrt{(x+1)(1-x)}$ is, up to a constant, the graph of the (upper) unit circle.

\subsection*{$\beta$-models} Most results on $\beta$-models (see, for instance, \cite{anderson2010introduction,BG11,erdos2012local,bourgade2014edge,BFG15,FG16,BG_multicut24} and the references therein) rely on the assumption that the semicircle represents the general behavior of such minimizing measures. More precisely, given a potential $V,$ the following assumption (usually called ``off-criticality'' or ``regularity'' assumption) is made:

\smallskip
\noindent
{\bf (A)} {\it The  minimizing probability $\mu_V$ is supported over finitely many disjoint compact intervals and, inside each such interval $[a,b]$,
it has the form
\begin{equation}\label{eq:form-regular-potential}
    \frac{d\mu_V}{dx} = Q_V(x)\sqrt{(x-a)(b-x)},
\end{equation}
for some function $Q_V:\R\to \R$ satisfying $0<c \leq Q_V(x)\leq C$.}

\smallskip

Unfortunately, the property above is known to be false for arbitrary potentials. Still, it is conjectured to be ``generically'' true.
In the case of analytic potentials, this has indeed been shown in~\cite{kuijlaars2000generic}, but up to now, nothing was known in the non-analytic setting (except for some very special cases).

\smallskip

In this article, we give a positive answer to this conjecture for $C^{2,\alpha}$ potentials. Also, we show that higher regularity assumptions on $V$ yield higher regularity of $Q_V$.
Because it is well-known that the space of regular potentials is open, the challenge is to prove that they are dense.

Here and in the following, given $j \in \mathbb N$ and $\beta \in [0,1],$ we denoted by $C_{\rm loc}^{j,\beta}(\R)$ the space of functions $g:\R\to \R$ that belong to $C^{j}(\R)$ and whose $j$-th derivative is locally $\beta$-H\"older continuous, that is
$$
\sup_{x\neq y \in [-R,R]}\frac{|D^jg(x)-D^jg(y)|}{|x-y|^\beta}<\infty\qquad \forall\,\,R>0.
$$
Our first main result is the following (see Theorem~\ref{teo:generic_continuousCase2} below for a more precise statement):

\begin{teo}\label{teo:generic_continuousCase}
    Given $\alpha \in (0,1)$, let $V\in C_{\rm loc}^{2,\alpha}(\R)$ satisfy $\lim_{|x|\to+\infty}\frac{V(x)}{\log|x|}=+\infty$. Given $\gamma\in \R$, consider the family of potentials $V_{s,\gamma}(x):=\frac{V(s^\gamma x)}{s}$, $s>0$. Then $V_{s,\gamma}$ is regular for a.e.  $s>0$.\\
    In particular, the set of potentials for which {\bf (A)} holds is open and dense in the class of $C_{\rm loc}^{2,\alpha}$ potentials that diverge at infinity faster than a logarithm.\\
    Furthermore, if {\bf (A)} holds and $V\in C_{\rm loc}^{k+1/2+\beta}(\R)$ for some $k\ge2$ and $\beta\in(0,1)$,\footnote{Here and in the sequel, for convenience of notation, we denote $$C_{\rm loc}^{k+1/2+\beta}(\R)=
\left\{
\begin{array}{ll}
C_{\rm loc}^{k,\beta+1/2}(\R)&\text{if }\beta \leq 1/2,\\
C_{\rm loc}^{k+1,\beta-1/2}(\R)&\text{if }\beta > 1/2.
\end{array}
\right.
$$}
then $Q_V\in C_{\rm loc}^{k-1,\beta}(\R)$.
\end{teo}

\begin{oss}
In the case of analytic potentials, this result with $\gamma=1$ recovers the one of~\cite{kuijlaars2000generic}.
\end{oss}

\subsection*{Discrete $\beta$-models}
A discrete version of the previous problem naturally arises, for instance, from asymptotics of distributions of discrete $\beta$-ensembles or in the study of orthogonal polynomials of a discrete variable, see~\cite{borodin2017gaussian,dragnev_saff1997}.

In this case, given $N\in\N$ and $\theta>0$, the energy~\eqref{eq:energy} is minimized among probability measures with density bounded above by $\theta$ and supported inside a compact set $K:=\bigcup_{h=1}^N[a_h,b_h]$, where the previous intervals are disjoint. We always assume $\theta|K|>1$.
In addition, as considered in \cite{borodin2017gaussian}, one prescribes the mass of the admissible measures $\mu$ inside each interval:
\begin{equation}
    \label{eq:mass h}
    \mu([a_h,b_h])=\hat n_h \geq 0,\qquad \sum_{h=1}^N \hat n_h=1,\qquad 0<\hat n_h<\theta|b_h-a_h|.
\end{equation}
Furthermore, the potential $V$ is differentiable inside $\bigcup_{h=1}^N[a_h,b_h]$ and is assumed to satisfy
\begin{equation}\label{eq:derivativeBound-discrete}
    |V'(x)|\le C\left(1+\sum_{h=1}^N\big|\log |x-a_h|\big|+\big|\log|x-b_h|\big|\right)
\end{equation}
for some constant $C>0$.

In analogy to the continuous case, an important assumption on the minimizing measure is the following (see, for instance, \cite{borodin2017gaussian}):

\smallskip
\noindent

{\bf (B)} {\it The set $\{0<\frac{d\mu_{V,\theta}}{dx}<\theta\}$ is a finite union of intervals compactly contained inside $\bigcup_{h=1}^N(a_h,b_h)$. In addition, the  density $\frac{d\mu_{V,\theta}}{dx}$ vanishes like a square root near $\partial\{\psi>0\}$ and converges to $\theta$ like a square root near $\partial\{\psi<\theta\}$. More precisely, given $p_-\in\partial\{\psi >0\}$ (resp. $p_+\in\partial\{\psi <\theta\}$) there exists a uniformly positive, bounded function $Q_-$ (resp. $Q_+$), defined in a neighborhood of $p_-$ (resp. $p_+$), such that
\begin{equation}\label{eq:form-regular-potential-discrete}\begin{split}
    &\frac{d\mu_{V,\theta}}{dx}(x) = Q_-\sqrt{|x-p_-|}\text{ for $x\in \left\{0<{\textstyle \frac{d\mu_{V,\theta}}{dx}}<\theta\right\}$, $|x-p_-|\ll 1$,}\\
&\text{(resp. }   \frac{d\mu_{V,\theta}}{dx}=\theta- Q_+\sqrt{|x-p_+|}\text{ for $x\in \left\{0<{\textstyle \frac{d\mu_{V,\theta}}{dx}}<\theta\right\}$, $|x-p_+|\ll 1$)}.
\end{split}\end{equation}
}

\smallskip

Given $N$ disjoint intervals $[a_h,b_h]$, $j\in\N$, and $\beta\in[0,1]$, we denote by $C^{j,\beta}_\loc\left(\bigcup_h (a_h,b_h)\right)$ the space of functions $g\colon\bigcup_h (a_h,b_h)\to\R$ that belong to $C^j\left(\bigcup_h (a_h,b_h)\right)$ and whose $j$-th derivative is locally $\beta$-H\"older continuous, that is
$$
\sup_{x\neq y \in \bigcup_h[-a_h+r,b_h-r]}\frac{|D^jg(x)-D^jg(y)|}{|x-y|^\beta}<\infty\qquad \forall\,\,r>0\text{ sufficiently small}.
$$

Here is our main result (see Theorem~\ref{teo:generic_discreteCase2} below for a more precise statement, in particular concerning the topology considered for our openness/denseness statement):

\begin{teo}\label{teo:generic_discreteCase}
Given $\alpha \in (0,1)$, let $V\in C_{\rm loc}^{2,\alpha}\left(\bigcup_h (a_h,b_h)\right)$ satisfy~\eqref{eq:derivativeBound-discrete}.
Consider the family of potentials $V_s(x):=\frac{V(s x)}{s}$, $0<s<\theta|K|$, and let $\mu_s$ be the probability measure minimizing $\cE_{V_s}$ with density bounded by $\theta$ (without additional constraints).
Then $\mu_s$ is regular for a.e.  $s\in(0,\theta|K|)$.
In particular, the set of potentials for which {\bf (B)} holds is open and dense in the class of $C_{\rm loc}^{2,\alpha}$ potentials satisfying~\eqref{eq:derivativeBound-discrete}.\\
In addition, for $s\in(0,\theta|K|)$, define the $(N-1)$-dimensional convex sets
$$
\mathcal N_s:=\bigg\{\mathbf n=(\hat n_1,\ldots,\hat n_N) \in \R^N\,:\,\sum_{h=1}^N \hat n_h=1,\qquad 0<\hat n_h<\theta\frac{|b_h-a_h|}{s}\bigg\}.
$$
Then, for a.e. $s\in(0,\theta|K|)$ and $\mathcal H^{N-1}$-a.e. $\mathbf n \in \mathcal N_s$, the probability measure $\mu_{s,\mathbf n}$ minimizing $\cE_{V_s}$ with constraint $\mathbf n$ and with density bounded by $\theta$ is regular.\\
Furthermore, if {\bf (B)} holds and $V\in C^{k+1/2+\beta}_{\rm loc}\left(\bigcup_h(a_h,b_h)\right)$ for some $k\ge2$ and $\beta\in(0,1)$,\footnote{As before, for convenience of notation, given $U\subset \R$ open we denote $$C_{\rm loc}^{k+1/2+\beta}(U)=
\left\{
\begin{array}{ll}
C_{\rm loc}^{k,\beta+1/2}(U)&\text{if }\beta \leq 1/2,\\
C_{\rm loc}^{k+1,\beta-1/2}(U)&\text{if }\beta > 1/2.
\end{array}
\right.
$$
}
then the functions $Q_\pm$ are of class $C_{\rm loc}^{k-1,\beta}$.
\end{teo}
\begin{oss}
    Note that the classes of potentials appearing in \cite[Eq. (120,122)]{borodin2017gaussian} satisfy \eqref{eq:derivativeBound-discrete} and are closed (up to global constants that can be neglected due to the mass constraint) with respect to the 1-homogeneous rescaling. Thus, our result implies:
    \begin{itemize}
        \item in the case of no additional mass constraints (as in \cite[Eq. (120)]{borodin2017gaussian}), for almost every choice of the parameters, the potentials are regular;
    \item in the case of additional mass constraints (as in \cite[Eq. (122)]{borodin2017gaussian}, for almost every choice of the mass constraints and of the parameters, the corresponding minimizing measures are regular.
    \end{itemize}
\end{oss}

\subsection*{Riesz potentials} One can also consider more general energies of the form
\[\cE(\mu) = \iint_{\R^d\times\R^d}\big( \mathfrak g(x-y)+V(x)+V(y)\big)\,d\mu(x)\,d\mu(y),\]
where $d \geq 1$ and
\begin{equation}\label{eq:g}
\mathfrak g(x) = \begin{cases}
    \frac1{\sigma}|x|^{-\sigma} &\text{if } \sigma\neq0,\\
    -\log|x| &\text{if }\sigma=0,
\end{cases}\qquad\text{with}\quad \sigma \in (d-2,d).
\end{equation}
These energies arise from the study of Riesz gases (see for instance~\cite{serfaty2024lectures} and references therein).
In this context, the analogue of assumption \textbf{(A)} is the following:

\smallskip
\noindent
{\bf ($\mathbf{A_\sigma}$)} {\it The  minimizing probability $\mu_V$ is supported over finitely many disjoint compact sets $\{K_j\}_{1\leq j \leq M}\subset \R^d$, with $\partial K_j$ a $(d-1)$-dimensional manifold of class $C^{1}$. Also, inside $K_j$,
it has the form
\begin{equation}\label{eq:form-regular-potential sigma}
    \frac{d\mu_V}{dx} = Q_V(x)\,{\rm dist}(x,\partial K_j)^{1-\frac{d-\sigma}2},
\end{equation}
for some function $Q_V:\R^d\to \R$ satisfying $0<c \leq Q_V(x)\leq C$.}

\smallskip

As discussed in Appendix~\ref{app:riesz case},
our strategy also applies in this case and proves the generic validity of {\bf ($\mathbf{A_\sigma}$)} in low dimensions (see
Theorem~\ref{teo:generic_riesz}).

\subsection{Comments on the proofs and structure of the paper}
To prove Theorem~\ref{teo:generic_continuousCase},
we adopt Serfaty's strategy 
to associate to $\mu_V$ a solution to the thin obstacle problem (see \cite{Serfaty2015,LS17,serfaty2024lectures,peilen_serfaty2024local} and Section~\ref{sect:min to thin} below).
In this way, one may hope to apply the regularity theory available for generic solutions of the thin obstacle problem \cite{FERNANDEZREAL2023109323}. Note that the latter theory applies only to a strictly monotone family of solutions, so one would need to find perturbations of $V$ that induce monotone perturbations of the solutions. 
While this seems difficult to achieve, we follow an idea from~\cite{kuijlaars2000generic} where, instead of changing the potential, we vary the mass constraint. 
This variation yields a monotone family of solutions (see Proposition~\ref{prop:monotonicity_continuousCase}), allowing the application of the generic regularity theory for the thin obstacle problem. Finally, by using scaling properties of minimizers of $\mathcal{E}_V$, we obtain the desired result (see Remark~\ref{rmk:rescaling_continuous}).

\smallskip

While in the case of $\beta$-models we can rely on the available theory for the thin obstacle problem, the discrete case (Theorem~\ref{teo:generic_discreteCase}) is considerably more challenging. In this setting, too, the regularity of the minimizing measure can be related to the regularity of a solution of a PDE. However, the additional constraint on the upper bound of the measure’s density forces the solution to satisfy a two-phase thin obstacle problem (see Theorem~\ref{teo:teo:limitmeasure_discrete}), for which no generic regularity theory is currently available.
A key result established in Appendix~\ref{sec:phase separation} addresses this difficulty: in the two-phase thin obstacle problem, phases with opposite signs cannot touch. This result is not only central to our analysis, but also provides a fundamental tool for studying the free boundary in cohesive zone models for fracture mechanics (cf.~\cite{Caffarelli2020}), and it extends a related result from \cite{Allen15}.
Another challenge arises from the non-smoothness of $V$ at the boundary of its support, which may create singularities in our solutions. These issues are resolved in Appendices~\ref{sec:phase separation} and \ref{sec:double top}. In particular, the results proved in Appendix~\ref{sec:phase separation} fix a gap in \cite{Caffarelli2020} and are of independent interest.

\smallskip

Since this paper uses and develops nontrivial PDE tools but has applications beyond the PDE community, we structure the paper as follows: In Section~\ref{sec:min obst}, following Serfaty's argument, we show the connection between minimizing measures and obstacle problems. Then, in Sections~\ref{sec:proof thm1} and~\ref{sec:proof thm2} we prove our main results, taking for granted the generic regularity properties of solutions to obstacle problems.
Finally, all PDE materials (both the known theory for the classical thin obstacle problem and the new theory needed for the case of discrete models) are postponed to Appendices~\ref{sect:TOP}, \ref{sec:phase separation}, and~\ref{sec:double top}.
In Appendix~\ref{app:riesz case}, we briefly discuss the case of Riesz potentials.

\smallskip
{\it Acknowledgments:} We thank Alice Guionnet for suggesting this problem to us and for pointing out the connection, due to Sylvia Serfaty, between minimizing measures and obstacle problems. AF is grateful to the Marvin V. and Beverly J. Mielke Fund for supporting his stay at IAS Princeton, where part of this work has been done.

\section{From minimizing measures to obstacle problems}\label{sec:min obst}
The goal of this section is to discuss some preliminary facts about measures minimizing the energy \eqref{eq:energy} and show the connection to the thin obstacle problem.
This will allow us to reformulate our main theorems in a slightly different form, see Theorems~\ref{teo:generic_continuousCase2}
and~\ref{teo:generic_discreteCase2} below.
The proofs of these two theorems will then be given in the next two sections.

We begin this section by discussing some preliminary facts about measures minimizing the energy \eqref{eq:energy}.

\subsection{From minimizing measures to the thin obstacle problem}\label{sect:min to thin}
Here and in the sequel, we denote by $\cM_1(\R)$ the space of nonnegative probability measures on $\R$.
We begin with the non-discrete case.
\begin{teo}[{\cite[Lemma 2.6.2]{anderson2010introduction}}]\label{teo:limitmeasure}
    Let $V:\R\to \R$ be a continuous potential satisfying
    \[\lim_{|x|\to\infty}\frac{V(x)}{\log|x|}=+\infty.\]
    Then there is a unique probability measure $\mu_V$ minimizing
    \[\inf\{\mathcal{E}_V(\nu)\,:\,\nu\in\cM_1(\R)\}.\]
    In addition, the measure $\mu_V$ is compactly supported and it is uniquely determined by the existence of a constant $C_V$ such that
    \begin{equation}\label{eq:ele}-\int_\R\log|x-t|\,d\mu_V(t)\ge C_V-V(x)\quad\text{ for every }x\in\R,\text{ with equality }\mu_V\text{-a.e}.\end{equation}
\end{teo}

To connect minimizing measures to the thin obstacle problem, given the measure $\mu_V$ minimizing $\mathcal E_V$ in $\cM_1(\R)$, consider the function
    \begin{equation}\label{eq:potentials}u_V(x)=-\big(\log|\cdot|*\mu_V\big)(x)- C_V,\quad x\in\R^2.\end{equation}
Then, as discussed for instance in \cite[Chapter 2.4]{serfaty2024lectures}, \eqref{eq:ele} is equivalent to asking that $u_V$ solves the thin obstacle problem with obstacle $-V$ (cf. Appendix~\ref{sect:TOP}):
\begin{equation}\label{eq:eleTop}\begin{cases}
    -\Delta u_V\ge0 &\text{in }\R^2,\\
    \Delta u_V = 0  &\text{in }\R^2\setminus(\{u_V = -V\}\cap\{x_2=0\}),\\
    u_V\ge -V       &\text{in }\{x_2=0\},\\
    \frac{u_V(x)}{\log|x|}\to -1&\text{as }|x|\to+\infty.
\end{cases}\end{equation}
Furthermore, if a solution $u$ of~\eqref{eq:eleTop} is regular in the sense of Definition~\ref{defn:regular-solution-top}, then the associated measure satisfies property {\bf (A)} in the introduction, namely its support consists of finitely many intervals and its density satisfies~\eqref{eq:form-regular-potential}.
This justifies the following definition (here and in the sequel, $\cH^k$ denotes the $k$-dimensional Hausdorff measure).

\begin{defn}[Regular potential]\label{defn:regular-potential-continuous}
We say that a potential $V$ is \emph{regular} if:
\begin{itemize}
    \item[\textit{i)}] $\{u_V = -V\}=\supp\mu_V$;
    \item[\textit{ii)}] $\supp\mu_V$ is a finite union of disjoint compact intervals;
    \item[\textit{iii)}] $\mu_V\ll\cH^1$ and its density is of the form~\eqref{eq:form-regular-potential} in each interval $[a,b]$ of $\supp\mu_V$,
\end{itemize}
    where $u$ is defined by \eqref{eq:potentials}.
\end{defn}

With this terminology, we can now state a slightly refined version of Theorem~\ref{teo:generic_continuousCase}. However, we first precisely define the topology considered here when talking about an open and dense set.
Define
\[
X\coloneqq\Big\{V\in C^{2,\alpha}_{\rm loc}(\R)\,:\,\lim_{|x|\to+\infty}\frac{V(x)}{\log|x|}=+\infty\Big\}
\]
endowed with the distance\footnote{In \cite{kuijlaars2000generic}, the authors consider an analogous distance but in the space of $C^{3}$ potentials.}
\[\begin{split}
    \rho(V,W):=\sum_k 2^{-k}\frac{\|V-W\|_{C^{2,\alpha}(-k,k)}}{1+\|V-W\|_{C^{2,\alpha}(-k,k)}} +\sum_k 2^{-k}\frac{|G_k(V)-G_k(W)|}{1+|G_k(V)-G_k(W)|},
\end{split}\]
where
\[
    G_k\colon X\to\R,\qquad G_k(V)\coloneqq \inf_{|x|\ge k}\frac{V(x)}{\log|x|}.
\]
Notice that $\rho(V_k,V)\to 0$ if and only if $V_k\to V$ in $C^{2,\alpha}_\loc(\R)$ and $\lim_{|x|\to+\infty} \frac{V_k(x)}{\log|x|}=+\infty$ uniformly in $k$.

We can now state our result.

\begin{teo}\label{teo:generic_continuousCase2}    
    Given $\alpha \in (0,1)$, let $V\in C_{\rm loc}^{2+\alpha}(\R)$ satisfy $\lim_{|x|\to+\infty}\frac{V(x)}{\log|x|}=+\infty$. Given $\gamma\in \R$, consider the family of potentials $V_{s,\gamma}(x):=\frac{V(s^\gamma x)}{s}$, $s>0$.
    Then $V_{s,\gamma}$ is regular for a.e. $s>0$ in the sense of Definition~\ref{defn:regular-potential-continuous}.
    In particular, the set of regular potentials is an open and dense subset of $(X,\rho)$.\\
    Furthermore, if $V:\R\to \R$ is regular and belongs to $C_{\rm loc}^{k+1/2+\beta}(\R)$ for some $k\ge2$ and $\beta\in(0,1)$, then the function $Q_V$ in~\eqref{eq:form-regular-potential} is of class $C_{\rm loc}^{k-1,\beta}(\R)$.
\end{teo}

\subsection{From minimizing measures to the two-phase thin obstacle problem: the discrete case}
Given $K=\bigcup_{h=1}^N[a_h,b_h]$ a finite union of disjoint intervals, we denote
\[
    \cM_{1,\theta}(K)\coloneqq\left\{\nu=\eta\cH^1\in\cM_{1}(\R)\,:\,0\le \eta\le\theta,\quad \supp\nu\subset K\right\}.
\]
We assume that $\theta|K|>1$, so that $\cM_{1,\theta}(K)$ is nontrivial.
In addition, given nonnegative numbers $\hat n_h$ satisfying \eqref{eq:mass h}, we set $\mathbf n=(\hat n_1,\ldots,\hat n_N) \in \R^N$ and 
\[
    \cM_{\mathbf n,\theta}(K)\coloneqq\left\{\nu = \eta\cH^1 \in \cM_1(\R)\,:\, 0\le\eta\le\theta,\quad\supp\nu\subseteq K,\quad \nu([a_h,b_h])=\hat n_h\right\}.
\]
\begin{teo}\label{teo:teo:limitmeasure_discrete}
    For every $\theta>0$ satisfying $\theta|K|>1$ there exists a unique probability minimizing
    \[
        \inf\{\mathcal{E}_V(\nu)\,:\,\nu\in\cM_{1,\theta}(K)\}.
    \]
   Moreover, a probability $\mu$ is minimizing if and only if there is a constant $C$ such that the function
\begin{equation}\label{eq:potentials2}
    u(x)=-\big(\log|\cdot|*\mu\big)(x)- C,\quad x\in\R^2,
\end{equation}
    solves
\begin{equation}\label{eq:ele_discrete}
    \begin{cases}
        \Delta u = 0      &\text{in }\R^2\setminus K,\\
        -2\pi\theta\le\partial_2u\le0  &\text{in }K,\\
        \partial_2u=0     &\text{in }K\cap\{u>-V\},\\
        -\partial_2u=2\pi\theta   &\text{in }K\cap\{u<-V\},\\
        \frac{u(x)}{\log|x|}\to -1   &\text{as }|x|\to+\infty,
    \end{cases}
\end{equation}
where $u$ is even in $x_2$ and the value $\partial_2u$ at $\{x_2=0\}$ is intended as the limit from the right, namely $$\partial_2u(x_1,0)=\lim_{t\to0^+}\frac{u(x_1,t)-u(x_1,0)}{t}.$$
Similarly, given $\mathbf n=(\hat n_1,\ldots,\hat n_N)$ satisfying \eqref{eq:mass h}, there exists a unique probability minimizing
\[
    \inf\{\mathcal{E}_V(\nu)\,:\,\nu\in\cM_{\mathbf n,\theta}(K)\}.
\]
In addition, a measure $\mu\in\cM_{\mathbf n,\theta}(K)$ is minimizing if and only if there are constants $\{C_h\}_{h=1}^N$ such that, defining $u$ as in \eqref{eq:potentials2} with $C=0$, it solves 
    \begin{equation}\label{eq:ele_discrete mass}
    \begin{cases}
        \Delta u = 0      &\text{in }\R^2\setminus K,\\
        -2\pi\theta\le\partial_2u\le0  &\text{in }K,\\
        \partial_2u=0     &\text{in }\{x_2=0,a_h\le x_1\le b_h\}\cap\{u>-V-C_h\},\\
        -\partial_2u=2\pi\theta   &\text{in }\{x_2=0,a_h\le x_1\le b_h\}\cap\{u<-V-C_h\},\\
        \frac{u(x)}{\log|x|}\to -1   &\text{as }|x|\to+\infty.
    \end{cases}
\end{equation}
\end{teo}
\begin{proof}
    The existence and uniqueness of a minimizing probability $\mu$ is shown in~\cite[Lemma 5.1]{borodin2017gaussian}.
    The proof of the necessary and sufficient conditions \eqref{eq:ele_discrete} and \eqref{eq:ele_discrete mass} follows as in~\cite[Lemma 5.5]{borodin2017gaussian}.
\end{proof}

In analogy to the previous subsection, if a solution $u$ of~\eqref{eq:eleTop} is regular in the sense of Definition~\ref{defn:regular solution double top}, then the associated measure satisfies property {\bf (B)}. This fact motivates the following definition.

\begin{defn}[Regular potentials, discrete case]\label{defn:regular-potential-discrete}
    Given $\theta>0$ satisfying $\theta|K|>1$, we say that a potential $V$ is \emph{regular} if, denoting by $\psi$ the density of the minimizing measure, it holds:
    \begin{itemize}
        \item[\textit{i)}] $\{u = -V\}=\{0<\psi<\theta\}$;
        \item[\textit{ii)}] $\{0<\psi<\theta\}$ is a finite union of open intervals and is compactly contained in the interior of $\bigcup_{h=1}^N(a_h,b_h)$;
        \item[\textit{iii)}] $\psi$ (resp., $\theta-\psi$) is of the form~\eqref{eq:form-regular-potential-discrete} at each point $p_-\in\partial\{\psi>0\}$ (resp. $p_+\in\partial\{\psi<\theta\}$),
    \end{itemize}
    where $u$ is defined by \eqref{eq:potentials2}.
\end{defn}
The definition of regular potential in the case with additional mass constraints is completely analogous.

To state our refined version of Theorem~\ref{teo:generic_discreteCase}, we introduce a topology on the set
\[
    \widetilde X \coloneqq\left\{(V,K)\mid K\subset \R\text{ is of the form }\cup_{k=1}^N[a_h,b_h]\text{ and }V\in C^{2,\alpha}_\loc(K)\text{ satisfies~\eqref{eq:derivativeBound-discrete}}\right\}
\]
induced by the distance
\begin{multline*}\widetilde\rho((V,K),(W,C))\coloneqq \mathrm{dist}_{\mathrm{H}}\big(\graph(V_{|_K}),\graph(W_{|_C})\big)\\
    +\sum_{n\in\N}\sum_{h=1}^N \frac{\|V-W\|_{C^{2,\alpha}((K\cap C)_{1/n})}}{1+\|V-W\|_{C^{2,\alpha}((K\cap C)_{1/n})}},
\end{multline*}
where $\mathrm{dist}_{\mathrm{H}}$ denotes the Hausdorff distance between two sets, and given $E\subset\R$ and $\rho>0$, we denote $E_\rho=\{x\in E\,:\,\dist(x,E^c)>\rho\}$.

Note that $\widetilde\rho((V_k,K_k),(V,K))\to 0$ if and only if $V_k\to V$ in $C^{2,\alpha}_\loc(K)$ and the graphs of $V_k$ on $K_k$ converge in the Hausdorff distance to the graph of $V$ on $K$.

\begin{teo}\label{teo:generic_discreteCase2}
Given $\alpha \in (0,1)$, let $V\in C_{\rm loc}^{2+\alpha}\left(\bigcup_h (a_h,b_h)\right)$ satisfy~\eqref{eq:derivativeBound-discrete}.
Consider the family of potentials $V_s(x):=\frac{V(s x)}{s}$, $s\in(0,\theta|K|)$.
Then $V_s$ is regular in the sense of Definition~\ref{defn:regular-potential-discrete} for a.e.  $s\in(0,\theta|K|)$.
In particular, the set of regular potentials is an open and dense subset of $(\widetilde X,\widetilde \rho)$.\\
In addition, for $s\in(0,\theta|K|)$, define the $(N-1)$-dimensional convex sets
$$
\mathcal N_s:=\bigg\{\mathbf n=(\hat n_1,\ldots,\hat n_N) \in \R^N\,:\,\sum_{h=1}^N \hat n_h=1,\qquad 0<\hat n_h<\theta\frac{|b_h-a_h|}{s}\bigg\}.
$$
Then, for a.e. $s\in(0,\theta|K|)$ and $\mathcal H^{N-1}$-a.e. $\mathbf n \in \mathcal N_s$, the probability measure $\mu_{s,\mathbf n}$ minimizing $\cE_{V_s}$ with constraint $\mathbf n$ is regular in the sense of Definition~\ref{defn:regular-potential-discrete}.\\
Furthermore, if $V:\bigcup_h[a_h,b_h]\to \R$ is regular and belongs to $C_{\rm loc}^{k+1/2+\beta}(\bigcup_h(a_h,b_h))$ for some $k\ge2$ and $\beta\in(0,1)$,
then the functions $Q_\pm$ are of class $C^{k-1,\beta}_{\rm loc}$.
\end{teo}

\section{Proof of Theorem~\ref{teo:generic_continuousCase2}}\label{sec:proof thm1}

As mentioned in the introduction, the main difficulty is to prove the density of regular potentials.
We will achieve this by showing that, given $\gamma\in\R$ and a potential $V\in C^{2,\alpha}_{\rm loc
}(\R)$ satisfying $\lim_{|x|\to+\infty}\frac{V(x)}{\log|x|}=+\infty$ and $\gamma\in\R$, the potential $V_{s,\gamma}\coloneqq \frac{V(s^\gamma\cdot)}{s}$ is regular for a.e.  $s>0$.

Here and in the sequel, we denote by $\cM_s(\R)$ the space of nonnegative measures on $\R$ with total mass $s>0$. The following observation, already used in  the proof of generic regularity for analytic potentials in~\cite{kuijlaars2000generic} for $\gamma=0$, relates scaling the potential $V_{s,\gamma}$ to rescaling the mass.
\begin{oss}\label{rmk:rescaling_continuous}
Given $s>0$, denote $\rho_s(x):=sx$.
Then, for every $\gamma\in\R$ and $s>0$, a measure $\mu$ minimizes $\{\mathcal{E}_V(\nu)\,:\,\nu\in\cM_s(\R)\}$ if and only if $\frac1s (\rho_{s^{-\gamma}})_\#\mu$ minimizes $\{\mathcal{E}_{V_{s,\gamma}}(\nu)\,:\,\nu\in\cM_1(\R)\}$\footnote{Given measurable spaces $X,Y$, a measure $\mu$ on $X$, and a measurable function $f\colon X\to Y$, we denote by $f_\#\mu$ the push-forward of $\mu$ through $f$, namely the measure on $Y$ defined by $f_\#\mu(A)=\mu(f^{-1}(A))$ for every measurable set $A\subset Y$.}.
\end{oss}

Given a potential $V$ and $\gamma\in\R$, for every $s>0$
let $\mu_s$ be the measure minimizing $\{\mathcal{E}_V(\nu)\,:\,\nu(\R)=s\}$,\footnote{Note that
Theorem~\ref{teo:limitmeasure} applies, with the same proof, to any value of $s>0$.} and consider the function \begin{equation}\label{eq:potentials s}u_s(x)=-\log|\cdot|*\mu_s(x)- \gamma\log s-sC_{V_{s,\gamma}},\quad x\in\R^2,\end{equation}
    where $C_{V_{s,\gamma}}$ is the constant associated to the probability measure minimizing $\cE_{V_{s,\gamma}}$ in $\cM_1(\R)$.
Then, applying Theorem~\ref{teo:limitmeasure} with $V_{s,\gamma}$ and Remark~\ref{rmk:rescaling_continuous}, it follows that $u_s$ solves the following thin obstacle problem with obstacle $-V$:
\begin{equation}\label{eq:eleTop s}\begin{cases}
    -\Delta u_s\ge0 &\text{in }\R^2,\\
    \Delta u_s = 0  &\text{in }\R^2\setminus\{u_V = -V\}\cap\{x_2=0\},\\
    u_s\ge -V       &\text{in }\{x_2=0\},\\
    \frac{u_s(x)}{\log|x|}\to -s&\text{as }|x|\to+\infty.
\end{cases}\end{equation}

The following result is crucial for us.

\begin{prop}\label{prop:monotonicity_continuousCase}
    The functions $u_s$ are decreasing in $s$, namely $u_s\ge u_{s'}$ for any $s<s'$.
    More precisely, they are strictly decreasing in the following sense: for every $M>0$ there is $R_0$ such that for any $R\ge R_0$ there is $a=a(R)>0$ such that
    \begin{equation}\label{eq:strictMonotonicity-continuousCase}
    u_{s-\delta}-u_s > a\delta\quad\text{in }\partial B_R\cap\{|x_2|>R/2\}\end{equation}
    for every $0< s-\delta\le s\le M$ and $R\ge R_0$.
\end{prop}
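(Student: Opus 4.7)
My plan is to split the proof into two parts: the qualitative inequality $u_s\ge u_{s'}$ for $s<s'$, followed by the quantitative lower bound. For the first part, I would apply the minimum principle for superharmonic functions to $w := u_s - u_{s'}$ on the complement $\Omega'$ of the contact set of $u_{s'}$. On $\Omega'$, the function $u_{s'}$ is harmonic while $u_s$ remains superharmonic (since $-\Delta u_s = 2\pi\mu_s\ge 0$), so $w$ is superharmonic there; the obstacle inequality $u_s\geq -V = u_{s'}$ gives $w\geq 0$ on $\partial\Omega'$, and the asymptotic $w/\log|x|\to s'-s>0$ forces $w\to+\infty$ at infinity, so the minimum principle delivers $w\geq 0$ on all of $\R^2$.

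For the quantitative part, the key ingredient I would use is the classical Buyarov--Rakhmanov measure monotonicity (cf.\ \cite[Lemma 4]{Buyarov1999}): for $s-\delta<s$, $\mu_{s-\delta}\leq\mu_s$ as measures on $\R$. This can be proved by a barrier argument based on the observation that $\mu_s-\mu_{s-\delta}$ coincides with the minimiser of $\mathcal E_{V+u_{s-\delta}}$ in $\cM_\delta(\R)$, whose nonnegativity follows from a comparison with a suitable balayage. Granting this, the supports $\supp\mu_s$ are uniformly contained in a fixed ball $B_\rho$ for every $s\in(0,M]$, and for any $x_0\in\supp\mu_{s-\delta}\subseteq\supp\mu_s$ the contact relations $u_s(x_0)=u_{s-\delta}(x_0)=-V(x_0)$ force $w_\delta(x_0)=0$, where $w_\delta:=u_{s-\delta}-u_s$. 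Expanding the definition \eqref{eq:potentials s} and using $w_\delta(x_0)=0$ to eliminate all $s$- and $\delta$-dependent additive constants then yields the clean formula
\[
w_\delta(x)\;=\;\int_\R\log\!\biggl(\frac{|x-y|}{|x_0-y|}\biggr)\,d\nu_\delta(y),\qquad \nu_\delta := \mu_s-\mu_{s-\delta}\geq 0,\quad \nu_\delta(\R)=\delta.
\]

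For $|x|=R\geq R_0:=5\rho$ and $y\in B_\rho$, one has $|x-y|\geq R-\rho$ and $|x_0-y|\leq 2\rho$, so the integrand is uniformly bounded below by $\log(R/(4\rho))$; integrating against the positive measure $\nu_\delta$ of mass $\delta$ yields $w_\delta(x)\geq\delta\log(R/(4\rho))$ on all of $\partial B_R$, which is in fact stronger than \eqref{eq:strictMonotonicity-continuousCase} (the restriction $|x_2|>R/2$ plays no role in this argument) and gives $a(R)=\log(R/(4\rho))$. The main obstacle in this strategy is the Buyarov--Rakhmanov measure monotonicity; everything downstream reduces to a short computation with the asymptotics of logarithmic potentials at infinity. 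If one wished to avoid citing \cite{Buyarov1999}, an alternative would be to differentiate in $s$ and identify $-\partial_s u_s$ with a Green's-function-type object for the exterior of the contact set $\{u_s=-V\}$, exploiting that its Robin constant stays bounded for $s\leq M$, but the measure-monotonicity route is cleaner.
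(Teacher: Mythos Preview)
Your argument is correct. For the monotonicity $u_s\ge u_{s'}$, your minimum-principle argument on the complement of the contact set of $u_{s'}$ is essentially the same idea as the paper's, which packages it via Remark~\ref{rmk:comparisonPrinciple-top} as subharmonicity of $(u_{s'}-u_s)_+$ on all of $B_R$.

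For the strict lower bound~\eqref{eq:strictMonotonicity-continuousCase}, however, your route differs from the paper's. You import the Buyarov--Rakhmanov measure monotonicity $\mu_{s-\delta}\le\mu_s$, then normalise constants at a common contact point $x_0$ and compute the logarithmic potential of the nonnegative measure $\nu_\delta=\mu_s-\mu_{s-\delta}$ explicitly; this yields the sharp explicit constant $a(R)=\log(R/(4\rho))$, valid on the whole sphere $\partial B_R$. The paper instead stays purely on the PDE side: from the potential monotonicity already established, the contact sets $\{u_s=-V\}$ increase in $s$ and are therefore uniformly contained in some $B_{R_0/2}$ for $s\le M$; then $w_\delta:=u_{s-\delta}-u_s$ is nonnegative, harmonic outside $[-R_0/2,R_0/2]\times\{0\}$, and asymptotic to $\delta\log|\cdot|$, so a single comparison with the fixed harmonic function $\eta$ (solving the same exterior problem with unit logarithmic growth) gives $w_\delta\ge\delta\eta$ and $a(R)=\inf_{\partial B_R}\eta$. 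Your approach buys an explicit $a(R)$ and does not need the restriction $|x_2|>R/2$, at the price of invoking \cite{Buyarov1999}; the paper's buys self-containment and, more importantly, transfers verbatim to the discrete setting (Proposition~\ref{prop:monotonicity_discrete}), where a measure-monotonicity statement is not readily available.
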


    A proof of the monotonicity is essentially contained in~\cite[Theorem 2]{Buyarov1999}, using potential theory. Here we give an alternative proof using the comparison principle for~\eqref{eq:eleTop}, based on the following remark.
\begin{oss}\label{rmk:comparisonPrinciple-top}
    We state the comparison principle for~\eqref{eq:eleTop} allowing for different obstacles, since we will need it later.
    Let $u,v$ be two solutions of~\eqref{eq:eleTop} in $B_1$ with obstacle $\varphi_u,\varphi_v$. Assume that $\varphi_u\le\varphi_v$ in $\supp \Delta u$.
    Then $(u-v)_+$ is subharmonic in $B_1$.
    Indeed:\\
    - $\Delta (u-v)=-\Delta v\ge0$ in $\R^2\setminus\supp(\Delta u)$;
\\
- $u\le v$ in $\supp (\Delta u)$ (since, for $x\in\supp(\Delta u)$, we have $u(x)=\varphi_u(x)\le\varphi_v(x)\le v(x)$).\\
    Thus $\Delta (u-v)_+\ge0$ in $B_1$.
\end{oss}

For convenience of notation, we write $u \sim s\log|\cdot|$ to denote that $\lim_{|x|\to \infty} \frac{u(x)}{s\log|x|}\to 1$.

\begin{proof}[Proof of Proposition~\ref{prop:monotonicity_continuousCase}]
    Given $s<s'$, we note that
    $$u_s(x)\sim -s\log|x| \gg  -s'\log|x|\sim u_{s'}(x)\quad\text{ as $x\to+\infty$.}
    $$
    Hence, for any large enough ball $B_R$, we have $u_{s'}\le u_{s}$ in $\partial B_R$. Thus $(u_{s'}-u_{s})_+$ vanishes on $\partial B_R$, is nonnegative, and (by Remark~\ref{rmk:comparisonPrinciple-top}) it is subharmonic in $B_R$. By the maximum principle, this implies $u_{s'}\le u_{s}$ in $B_R$.
    Since $R$ can be taken arbitrarily large, we conclude that $u_s\ge u_{s'}$ in $\R^2$.

    Let us now show~\eqref{eq:strictMonotonicity-continuousCase}.
    Since $\supp \Delta u_s \subset\{u_s=-V\}$ and the functions are monotone, for every $M$ there is $R_0$ such that $\{u_s=-V\}\subset B_{R_0/2}$ for every $s\le M$.
    Thus we have
\[
    \Delta (u_{s-\delta}-u_s)=0     \quad\text{in }\R^2\setminus \{x_2=0,|x_1|\le R_0/2\},
\]
$u_{s-\delta}-u_s\ge0$ (by the previous step), and $u_{s-\delta}-u_s\sim \delta\log|\cdot|$ as $|x|\sim+\infty$.
Hence, if $\eta$ is a solution of the problem
\[\begin{cases}
    \Delta \eta =0  &\text{in }\R^2\setminus[-R_0/2,R_0/2]\times\{0\},\\
    \eta=0          &\text{in }[-R_0/2,R_0/2]\times\{0\},\\
    \eta\sim \log|\cdot|    &\text{as }|x|\to+\infty,
\end{cases}\]
the maximum principle implies $u_{s-\delta}-u_s\ge \delta\eta$ in $\R^2$. To conclude, we can take $a(R):=\inf_{\partial B_R}\eta$, which is positive by the strong maximum principle.
\end{proof}
We can now prove our main result.

\begin{proof}[Proof of Theorem~\ref{teo:generic_continuousCase2}]
We split the proof into 3 steps.

\noindent
    \textbf{Step 1: Regular potentials are dense.}
    Given $V\in C^{2,\alpha}_\loc(\R)$ satisfying $\lim_{|x|\to+\infty} \frac{V(x)}{\log|x|}=+\infty$ and $\gamma\in\R$,
    we show that the potentials $V_{s,\gamma}(x)=s^{-1}V(s^\gamma x)$ are regular for a.e. $s>0$.

    For every $s>0$, let $\mu_s$ be the measure minimizing $\cE_{V}$ in $\cM_s(\R)$ given by Theorem~\ref{teo:limitmeasure}, and let $u_s$ be given by~\eqref{eq:potentials s}.
    Then, recalling Remark~\ref{rmk:rescaling_continuous}, to show that $V_{\gamma,s}$ is regular, we need to prove that $u_s$ is regular.

    By Proposition~\ref{prop:monotonicity_continuousCase} and Theorem~\ref{teo:limitmeasure}, the functions $u_s$ defined in~\eqref{eq:potentials s} are a decreasing family of solutions of~\eqref{eq:top} in $\R^2$ with obstacle $-V$
    satisfying the strict monotonicity condition~\eqref{eq:strictMonotonicity-continuousCase}. Also, as shown in the proof of Proposition~\ref{prop:monotonicity_continuousCase}, the contact sets $\{u_s= -V\}$ are contained inside $B_{R/2}$ for every $s\le M$.
    Hence we can apply Theorem~\ref{teo:genericTOP} to the family $\{u_{1/t}\}_{t>0}$ in $B_R$
    to find that $u_s$ is regular for a.e. $s>0$, as desired.

\noindent
    \textbf{Step 2: Stability of regular potentials.}
    Let $V_k\to V$ in $C^{2,\alpha}_\loc(\R)$, with $\lim_{|x|\to+\infty} \frac{V_k(x)}{\log|x|}=+\infty$ uniformly in $k$. We prove that if the potentials $V_k$ are not regular, then also $V$ is not regular.

    Let $u_k=u_{V_k}$ and $u=u_V$ be defined as in \eqref{eq:potentials}. 
We first show that $u_k\to u$ locally uniformly in $\R^2$.
    To do this, given $\delta>0$,  consider $v_\delta^\pm$ solutions of~\eqref{eq:top} with obstacle $-V \pm\delta$ and satisfying $v_\delta^\pm\sim -(1\mp\delta)\log|\cdot|$ as $|x|\to+\infty$.
    Since $\supp (\Delta v_\delta^+)$ is compact, there exists $C_0$ sufficiently large such that $v_\delta^+$ still solves~\eqref{eq:top} with obstacle $$-\tilde V_\delta=\max\big\{-V+\delta,-C_0\big(1+\log(1+|x|)\big)\big\}.$$
    Since $V_k\to V$ locally and the potentials diverge at infinity faster than a logarithm,
   for $k$ sufficiently large we have
    \[-V_k\le -\tilde V_\delta \text{ on }\R\quad \text{ and }\quad -V_k\ge -V-\delta\text{ on }\supp(\Delta v_\delta^-).\]
    We claim that $v_\delta^-\le u_k\le v_\delta^+$ for $k\gg 1$.
    Indeed, for $k\gg 1$, the functions $u_k$ and $v_\delta^+$ solve~\eqref{eq:top} in $B_R$ with ordered obstacles. Moreover, since $u_k\sim -\log R\ll v_\delta^+$ on $\partial B_R$ for $R$ sufficiently large, we can apply Remark~\ref{rmk:comparisonPrinciple-top} to get $u_k\le v_\delta^+$.
    Analogously, $v_\delta^-\le u_k$.
    
    Since $v^\pm_\delta\to u$ locally uniformly as $\delta\to0$ we find that $u_k\to u$ locally uniformly. In particular, the contact sets $\{u_k=-V_k\}$ are equibounded.

    Since by assumption the potentials $V_k$ are not regular, there is a sequence of points $x_k\in\Sing(u_k)\subset\{u_k=-V_k\}$ (see Appendix~\ref{sect:TOP} for the definition of singular points).
    Since the contact sets are equibounded, there is an accumulation point $x_\infty$,
    namely $x_k\to x_\infty$ up to a subsequence.
    Then Lemma~\ref{lem:frequency-monotonicity} implies that $x_\infty\in\Sing(u)$, as wanted.

\noindent
    \textbf{Step 3: Higher order expansion.}
    As $\frac{d\mu_V}{dx}=-\partial_2u_V$, we simply need to show the result for $\partial_2u_V$.
    Since $u_V=-V\in C_{\rm loc}^{k+1/2+\beta}$ in the interior of $\supp\mu_V$, local $C^{k-1/2+\beta}$ regularity in the interior of $\supp\mu_V$ follows from boundary regularity for the Dirichlet problem.

    Let us show the regularity at the boundary of $\supp\psi$.
    Up to a translation and rescaling, we can assume that $u$ solves in the unit ball
    $B_1$ the following problem:
    \[\begin{cases}
        \Delta u = 0    &\text{in }B_1\setminus \{x_2=0,x_1\le0\},\\
        u=-V       &\text{in }\{x_2=0, -1<x\le 0\},\\
        u > -V     &\text{in }\{x_2=0,0<x_1<1\}.
    \end{cases}\]
    Let $P$ be the $k$-th order ($(k+1)$-th for $\beta>1/2$) Taylor expansion of $-V$ at $0$, and let $\tilde P$ be its unique harmonic extension to $\R^2$ which is even in the $x_2$ variable.
    Define $\tilde u\coloneqq u-(-V +\tilde P-P)$. Then $\tilde u$ satisfies
    \[\begin{cases}
       |\Delta \tilde u|\le C|x|^{k+\beta-3/2} &\text{in }B_1\setminus\{x_2=0,x_1\le0\},\\
       \tilde u=0 &\text{in }\{x_2=0,-1<x_1\le0\}.
    \end{cases}\]
    Defining in polar coordinates
    \[
    U_0(\rho,\theta)\coloneqq \rho^{1/2}\cos\theta/2,
    \]
    we can apply Theorem~\ref{teo:boundary-harnack} to find a polynomial $Q$ of degree $k$ such that
    \[
    \big|\tilde u-QU_0\big|\le C|x|^{k+\beta+1/2},
        \qquad \Delta (QU_0)=0,\qquad\text{and}
        \qquad Q(0)=0,
    \]
    where we used that $|u|\le Cr^{3/2}$ in $B_r$ (see Lemma~\ref{lem:regularity-top}) to deduce that $Q(0)=0$.
    Recalling the definition of $\tilde u$, this implies
    \[|u-\tilde P - U_0 Q|\le C r^{k+1/2+\beta}\quad\text{in }B_r.\]
    Let us set $\cC \coloneqq B_1\setminus (B_{1/4}\cup\{x_2=0,x_1\le0\})$. Then,
    for every $0<r<1$, the function
    \[v_r\coloneqq \frac{u(r\,\cdot)-\tilde P(r\,\cdot) -U_0(r\,\cdot)Q(r\,\cdot)}{r^{k+1/2+\beta}},\]
    satisfies
    \[
        \Delta v_r=0\quad\text{in }\cC,\quad|v_r|\le C\text{ in }\cC,\quad\text{and}\quad\|v_r(\cdot,0)\|_{C^{k+1/2+\beta}(-1,-1/4)}\le C,
    \]
    so elliptic regularity yields
    \[\|v_r\|_{C^{k+1/2+\beta}}\le C\quad\text{in }B_{3/4}\setminus B_{1/2}.\]
    Thus,  since $\partial_2\tilde P(\cdot,0)\equiv0$ (recall that $\tilde P$ is even in $x_2$),
    \[
        |\partial_2 u-\partial_2(U_0Q)|\le Cr^{k-1/2+\beta}\quad \text{in }B_{3r/4}\cap\{x_2=0\}\setminus B_{r/2}.
    \]
    By a direct computation and recalling that $Q(0)=0$, we have $\partial_2(U_0Q)(x_1,0)=(x_1)_-^{1/2}\cP$ for some polynomial $\cP(x_1)$ of degree $k-1$. Thus we can rewrite the previous inequality as
    \[
    \left|\frac{\partial_2u(x_1,0)}{(x_1)_-^{1/2}}-\cP(x_1)\right|\le Cr^{k-1+\beta}\quad\text{for }x_1\in(-3r/4,-r/2).
    \]
    This yields the desired $C^{k-1,\beta}$ regularity for $Q_V$ at the boundary of $\supp\mu_V$, from which the result follows.
\end{proof}

\section{Proof of Theorem~\ref{teo:generic_discreteCase2}}
\label{sec:proof thm2}

Given $K=\bigcup_{h=1}^N[a_h,b_h]$ a finite union of disjoint intervals define
\[
    \cM_{s,\theta}(K)\coloneqq\left\{\nu = \eta\cH^1\,:\, 0\le\eta\le\theta,\quad \nu(\R)=s,\quad\supp\nu\subseteq K\right\}.
\]
The following result is the natural generalization of Theorem~\ref{teo:teo:limitmeasure_discrete}.
\begin{teo}\label{teo:teo:limitmeasure_discrete2}
    For every $s\in(0,\theta|K|),\theta>0$ there exists a unique measure minimizing
    \[
        \inf\{\mathcal{E}_V(\nu)\,:\,\nu\in\cM_{s,\theta}(K)\}
    \]
    supported in $K$. Moreover, a measure $\mu$ is minimizing if and only if there is a constant $C$ such that the function
\begin{equation}\label{eq:potentials2-2}
    u(x)=-\log|\cdot|*\mu(x)- C,\quad x\in\R^2,
\end{equation}
    solves
\begin{equation}\label{eq:ele_discrete2}
    \begin{cases}
        \Delta u = 0      &\text{in }\R^2\setminus K,\\
        -2\pi\theta\le\partial_2u\le0  &\text{in }K,\\
        \partial_2u=0     &\text{in }K\cap\{u>-V\},\\
        -\partial_2u=2\pi\theta   &\text{in }K\cap\{u<-V\},\\
        \frac{u(x)}{\log|x|}\to -s   &\text{as }|x|\to+\infty.
    \end{cases}
\end{equation}
\end{teo}
The proof of the theorem above is a direct consequence of Theorem~\ref{teo:teo:limitmeasure_discrete} and the following observation that relates scalings of the potential to rescalings of the mass.

\begin{oss}\label{rmk:rescaling_discrete}
    Given a nonnegative bounded function $\psi$, let $\psi_s\coloneqq \psi(s\,\cdot)$. Then $\psi\cH^1$ minimizes $\mathcal{E}_V$ in $\cM_{s,\theta}(K)$ if and only if $\psi_s\cH^1$ minimizes
    $\mathcal{E}_{V_s}$ in $\cM_{1,\theta}(K/s)$, where $V_s = \frac{V(s\,\cdot)}{s}$.
\end{oss}

Given a potential $V$, a parameter $\theta$, and $s>0$, denote by $\psi_s$ the density minimizing $\{\mathcal{E}_V(\eta):\eta\in\cM_{s,\theta}(K)\}$, and  by $u_s$ the associated solution of \eqref{eq:ele_discrete2} given by  \eqref{eq:potentials2-2}.
\begin{prop}\label{prop:monotonicity_discrete}
    The functions $u_s$ are decreasing in $s$, namely $u_s\ge u_{s'}$ for any $s<s'$.
    More precisely, they are strictly decreasing in the following sense: for every $M>0$ there is $R_0$ such that for any $R\ge R_0$ there is $a=a(R)>0$ such that
    \begin{equation}\label{eq:strictMonotonicity-discreteCase}
    u_{s-\delta}-u_{s} > a\delta\quad\text{in }\partial B_R\cap\{|x_2|>R/2\}\end{equation}
    for every $0< s-\delta\le s\le M$ and $R\ge R_0$.
\end{prop}
The proof is based on the following Remark, which plays the same role as Remark~\ref{rmk:comparisonPrinciple-top} in the previous section.
\begin{oss}\label{rmk:comparisonPrinciple-doubleTop}
    As before, we state the comparison principle for~\eqref{eq:ele_discrete2} allowing for different obstacles.
    Let $u,v\colon B_2\subset \R^2\to\R$ be two functions, even in $x_2$, solving \eqref{eq:ele_discrete2} on possibly different unions of disjoint intervals $K_u$ and $K_v$. Assume that the obstacles of $u$ and $v$ are ordered, namely,
     $\varphi_u\ge\varphi_v$, where we use the convention $\varphi_u=-\infty$ outside $K_u$ (and analogously for $v$).
    Then $(v-u)_+$ is subharmonic in $B_2$.

    Indeed, calling $\psi_u,\psi_v$ the $x_2$-derivatives of $u,v$ at $\{x_2=0\}$, respectively, we have that the distributional Laplacian of $v-u$ is equal to twice the jump of these derivatives across $\{x_2=0\}$, that is,  
    \[
        \Delta(v-u)=2(\psi_v-\psi_u)\cH^1\mres\{x_2=0\},
    \]
    where $\cH^1$ denotes the 1-dimensional Hausdorff measure. So it suffices to check that $\psi_v\ge\psi_u$ in $\{x_2=0\}\cap\{v>u\}$.
    
    Given $x\in\{v>u\}$, we consider two cases: if $v(x)\le\varphi_v(x)$ then we have $u(x)<v(x)\le\varphi_v(x)\le\varphi_u(x)$, hence $\psi_u(x)=-2\pi\theta\le\psi_v(x)$.
    If $v(x)>\varphi_v(x)$ then $\psi_v(x)=0\ge\psi_u(x)$, as wanted.
\end{oss}
\begin{proof}[Proof of Proposition~\ref{prop:monotonicity_discrete}]
    The monotonicity is proven as in the proof of Proposition~\ref{prop:monotonicity_continuousCase}, using Remark~\ref{rmk:comparisonPrinciple-doubleTop} instead of Remark~\ref{rmk:comparisonPrinciple-top}.
    The proof of~\eqref{eq:strictMonotonicity-discreteCase} follows exactly that  of~\eqref{eq:strictMonotonicity-continuousCase}.
\end{proof}

\begin{proof}[Proof of Theorem~\ref{teo:generic_discreteCase2}] We split the proof in 3 steps.

\noindent\textbf{Step 1a: Regular potentials are dense -- without additional mass constraints.}
    Given $V\in C^{2,\alpha}_\loc(\R)$,
    we want to show that the potentials $V_s$ are regular for a.e. $s\in(0,\theta|K|)$, where $V_s(x)=s^{-1}V(s x)$.

    For this, we can argue as in the proof of Theorem~\ref{teo:generic_continuousCase2}, using Proposition~\ref{prop:monotonicity_discrete}, Theorem~\ref{teo:generic_doubleTOP}, and Remark~\ref{rmk:rescaling_discrete} instead of Proposition~\ref{prop:monotonicity_continuousCase}, Theorem~\ref{teo:genericTOP},
    and Remark~\ref{rmk:rescaling_continuous}, respectively.

\noindent\textbf{Step 1b: Regular potentials are dense -- with additional mass constraints.} Given $s\in(0,\theta|K|)$ we denote by $\cN_s$ the  $(N-1)$-dimensional convex set of admissible mass constraints, i.e.,
\[
    \cN_s=\{\mathbf n=(\hat n_1,\ldots,\hat n_N)\,:\,\mathbf n\text{ satisfies }\eqref{eq:mass h}\text{ in the set }K/s\}.
\]
Given $\mathbf n\in\mathcal N_s$ we write $\mu_{s,\mathbf n}$ for the probability minimizing $\cE_{V_s}$ with the mass constraint $\mathbf n$.

Fix now $\mathbf n\in\mathcal N_1$. By Theorem~\ref{teo:teo:limitmeasure_discrete}, there are Lagrange multipliers $C_h \in \R$  such that, if we consider the modified potential
\[
    V_{\mathbf C}\coloneqq V+\sum_{h=1}^N C_h\chi_{(a_h,b_h)},\quad \mathbf C=(C_1,\ldots,C_N),
\]
then the probability measure $\mu_{1,\mathbf n}$ minimizes $\cE_{V_{\mathbf C}}$ among all probabilities without additional mass constraints.
Then, it follows from Step 1a that the probability measure  $\tilde\mu_s$ minimizing $\cE_{(V_{\mathbf C})_s}$ (without additional mass constraints) is regular for a.e. $s\in(0,\theta|K|)$. Since, by construction, $(V_{\mathbf C})_s$ equals $V_s$ up to constants in each interval, it follows that $\tilde\mu_s$ minimizes $\cE_{V_s}$ among measures with the additional mass constraint
$$\gamma_{s,\mathbf n}:=\big(\tilde\mu_s((a_1/s,b_1/s)),\ldots,\tilde\mu_s((a_1/s,b_1/s))\big) \in \mathcal N_s.$$
In particular, by the previous discussion and Step 1a, for $\mathcal H^1$-a.e. $s\in(0,\theta|K|)$ and $\mathcal H^{N-1}$-a.e. $\mathbf n\in\mathcal N_s$, the probability measure $\mu_{s,\gamma_{s,\mathbf n}}$ is regular.

We now note that, by continuity, given $\bar{\mathbf n}\in\mathcal N_1$ there is $\delta>0$ such that if $|1-s|+|\mathbf n-\bar{\mathbf{n}}|<\delta$ then
\[
    \gamma_{s,\mathbf n}\in\mathcal N_1\cap\mathcal N_s.
\]
Thus, by applying the argument above to the measure $\mu_{s,\gamma_{s,\mathbf n}}$ (namely, finding constants $\mathbf C=(C_1,\dots,C_N)$ so that $\mu$ minimizes $\cE_{V_s+\mathbf C}$ and considering the probability minimizing $(V_s+\mathbf C)_{1/s}$) we see that the map $\mathbf n\mapsto \gamma_{s,\mathbf n}$ is surjective onto $|\mathbf n'-\bar{\mathbf n}|<\delta$.
If we now show that it is also locally Lipschitz, the Area Formula will imply that for $\mathcal H^1$-a.e. $s$ and for $\cH^{N-1}$-a.e. $\mathbf n'\in\mathcal N_s$  satisfying $|1-s|+|\mathbf n'-\bar{\mathbf n}|<\delta$, the measure $\mu_{s,\mathbf n'}$ is regular. This and a covering argument will then conclude the proof. So, we only need to show that 
$\mathbf n\mapsto \gamma_{s,\mathbf{n}}$ is locally Lipschitz.
\smallskip

To this aim,
consider the map $ \mathbf n\mapsto \mathbf C$ defined by mapping $\mathbf n\in\mathcal N_1$ to the constants $\mathbf C=(C_1,\dots,C_N)$ given by Theorem~\ref{teo:teo:limitmeasure_discrete} and chosen such that $\sum_{h=1}^N C_h=0$.
Also, for a given $s$ close to $1,$, consider the inverse map $\mathbf C\mapsto \mathbf n \in \mathcal N_s$ that provides the masses of the minimizing measure for $(V_{\mathbf C})_s.$

Then, by construction, $\mathbf n\mapsto \gamma_{s,\mathbf{n}}$ is given by a composition of these two maps, so it suffices to prove that each of them is locally Lipschitz.
To do so, we introduce the quantity
\[
    \mathcal D^2(\eta)=-\iint_{\R^2} \log|x-y|d\eta(x)d\eta(y).
\]
When $\eta(\R)=0$ then (see \cite[Equation (36)]{borodin2017gaussian} and \cite{fourierLogEnergy})
\begin{equation}\label{eq:fourier transform log energy}
    \cD^2(\eta) = \int_0^{+\infty}t^{-1}\left|\cF_\R\eta\right|^2dt=\frac1\pi\int_{\R^2}|\cF_{\R^2}\eta|^2|\xi|^{-2}d\xi,
\end{equation}
where $\cF_{\R^k}$ is the Fourier transform in $\R^k$, and we identify a measure on $\R$ with a measure on $\R^2$ supported on $\R\times\{0\}$.
In addition, if we set
\[
    u_\eta(x)\coloneqq -\log|\cdot|*\eta(x),\quad x\in\R^2,
\]
whenever $\eta(\R)=0$ we also have
\begin{equation}\label{eq:H1 estimates}
    \cD^2(\eta)=2\pi\int_{\R^2}|\nabla u_\eta|^2,
\end{equation}
as can be seen using $\hat\eta(0)=0$ and \cite[Equation (5)]{fourierLogEnergy}.
We are now ready to prove the local Lipschitz regularity of the two maps described before, which will conclude the proof of this step.

\smallskip\noindent\textit{$\bullet$ $\mathbf n\mapsto\mathbf C$ is locally Lipschitz.} 
    Let $\bar {\mathbf n}\in \cN_1$.
    It follows\footnote{In the notation of \cite{borodin2017gaussian}, a ``band'' is (a connected component of) $\{0<\frac{d\mu}{dx}<\theta\}$. As $\frac{d\mu}{dx}\in C^{1/2}$ by Corollary \ref{cor:optimal-regularity-doubleTop}, these are not empty in each interval, so the assumptions of \cite[Proposition 5.8]{borodin2017gaussian} are automatically satisfied for an admissible mass constraint.} from \cite[Proposition 5.8]{borodin2017gaussian} that the measures depend continuously on $\mathbf n$. By \eqref{eq:H1 estimates} and Corollary~\ref{cor:optimal-regularity-doubleTop}, the associated potentials given by \eqref{eq:potentials2} depend continuously with respect to the $C^1_{\rm loc}$ topology. Hence,  for each $h\in\{1,\dots,N\}$ there is an interval $I_h'\subset (a_h,b_h)$ such that $I_h'\subset \{0<\mu_{1,\mathbf n}<\theta\}$ for all $\mathbf n$ sufficiently close to $\bar {\mathbf n}$. Since the constant in \cite[Proposition 5.8]{borodin2017gaussian} depends only on these sets, it can be chosen locally independent from $\mathbf n$, that is,
    \[
        \cD(\mu_{1,\mathbf n_1}-\mu_{1,\mathbf n_2})\le C|\mathbf n_1-\mathbf n_2|\qquad \text{for }|\mathbf n_1-\bar {\mathbf n}|+|\mathbf n_2-\bar {\mathbf n}|<\delta
    \]
    In particular, if $u_1,u_2$ are the potentials given by \eqref{eq:potentials2}, it follows from \eqref{eq:H1 estimates} that
    \[
        \|\nabla(u_1-u_2)\|_{L^2(\R^2)}\le C|\mathbf n_1-\mathbf n_2|\qquad\text{and}\qquad u_1-u_2 = (\mathbf C_1)_h-(\mathbf C_2)_h\quad\text{in }I_h'.
    \]
   Hence, using Poincaré and trace inequalities, we conclude that, for some $\mathbf A=(a,\dots,a)\in\R^N$,
    \[
        |\mathbf C_1-\mathbf C_2-\mathbf A|\le C|\mathbf n_1-\mathbf n_2|.
    \]
    Since the vectors $\mathbf C_i$ have zero average, the claim follows.

    \smallskip\noindent$\bullet$ \textit{$\mathbf C\mapsto \mathbf n$ is locally Lipschitz.}
    Let $s>0$ be close to $1$, and  let $\mathbf C_1,\mathbf C_2$ be given. For $i=1,2$ we write $V_i=V_s+\sum_h (\mathbf C_i)_h\chi_{(a_h/s,b_h/s)}$, $\mu_i=\mu_{V_i},$ and we denote by $u_i$ the potential associated to $\mu_i$ by \eqref{eq:potentials2}.
    For all probabilities $\nu\in\cM_{1,\theta}(K/s)$ we can write
    \[
        \cE_{V_2}(\nu) = \cE_{V_2}(\mu_1)+\cD^2(\nu-\mu_1)+2\scalar{V_1+u_1,\nu-\mu_1}+2\scalar{V_2-V_1,\nu-\mu_1},
    \]
    where $\scalar{\cdot,\cdot}$ denotes the duality between measures and integrable functions.
    As $\cE_{V_2}(\mu_2)\le \cE_{V_2}(\mu_1)$ (by the minimality of $\mu_2$), we find
    \[
        \cD^2(\mu_2-\mu_1)\le -2\scalar{V_1+u_1,\mu_2-\mu_1}+2\scalar{V_1-V_2,\mu_2-\mu_1}.
    \]
    Since $\scalar{V_1+u_1,\mu_2-\mu_1}\ge0$ by \eqref{eq:ele_discrete}, we deduce
    \[
        \cD^2(\mu_2-\mu_1)\le C|\mathbf C_1-\mathbf C_2|\cD(\mu_2-\mu_1),
    \]
    where we used Plancherel together with \eqref{eq:fourier transform log energy} to bound $\scalar{V_1-V_2,\mu_2-\mu_1}$.
    In particular,
    \[
        \|\nabla(u_1-u_2)\|_{L^2(\R^2)}\le C|\mathbf C_1-\mathbf C_2|.
    \]
    Now, for every interval $(a_h/s,b_h/s)$, we fix a function $0\le\varphi_h\in C^\infty_c(\R^2)$ such that $\varphi_h\equiv1$ on $(a_h/s,b_h/s)$ and $\varphi_h\equiv 0$ in $K/s\setminus (a_h/s,b_h/s)$.
   Then
    \[
        |(\mathbf n_1)_h-(\mathbf n_2)_h|=\left|\int_{\R^2}\varphi_h\Delta(u_1-u_2)\right| = \left|\int_{\R^2}\nabla\varphi_h\nabla(u_1-u_2)\right|\le C|\mathbf C_1-\mathbf C_2|,
    \]
    as we wanted.

\noindent\textbf{Step 2: Stability of regular potentials.}
    We show that if $V_k\to V$ in $C^{2,\alpha}_\loc\left(\bigcup_h(a_h,b_{h})\right)$ and $V_k$ are not regular, then also $V$ is not regular.

    We first show that $u_{k}\to u$ locally uniformly in $\R^2$.
    Indeed, given $\delta>0$,  consider $v_\delta^\pm$ solutions of~\eqref{eq:ele_discrete} with obstacle $-V^\pm_\delta(x)=-V((1\mp\delta)x)\pm\eps(\delta)$
    and satisfying\footnote{The existence of these functions can be proved, for instance, using Theorem~\ref{teo:teo:limitmeasure_discrete2} (namely, minimizing $\mathcal E_{V^\pm_\delta}$ with an upper bound $\theta$ on the density and suitable mass constraints).}
$$v_\delta^\pm\sim-(1\mp\delta)\log|\cdot|\qquad\text{as $|x|\to+\infty$,}
    $$
    where $\eps(\delta)\to0$ as $\delta\to0$ and is chosen so that $$- V_\delta^+\ge -V+\delta \geq -V-\delta \geq - V_\delta^-.$$ 
    Since $-V^-_\delta\le -V_k\le -V^+_\delta$ for $k\gg 1$,  Remark~\ref{rmk:comparisonPrinciple-doubleTop} implies that $v_\delta^-\le u_k\le v_\delta^+$. Since $v_\delta^\pm\to u$ as $\delta\to0$, we find that also $u_k\to u$ locally uniformly, and in particular, for $k$ large enough, the contact sets $\{u_k(\cdot,0)=-V_k\}$ are compactly contained in $\cup_h(a_{h,k},b_{h,k})$.
    We can then conclude by arguing as in the proof of Theorem~\ref{teo:generic_continuousCase2}, Step 2, using Lemma~\ref{lem:stability-singular-doubleTOP} instead of Lemma~\ref{lem:frequency-monotonicity}.

\noindent\textbf{Step 3: Higher order expansion.}
    The argument is the same as in Step 3 in the proof of Theorem~\ref{teo:generic_continuousCase2}.
\end{proof}

\appendix

\section{Results from Thin Obstacle Problem}
\label{sect:TOP}

Here we collect useful results from the regularity of the thin obstacle problem, referring the interested reader to \cite[Chapter 9]{Petrosyan2012} and the recent survey \cite{FernandezReal2022} for more details.
To keep the notation consistent with the rest of the paper, we restrict ourselves to the two-dimensional setting.

Recall that, given a function $\varphi\colon[-1,1]\to\R$, a (two-dimensional) solution of the thin obstacle problem with obstacle $\varphi$ is a function $u\colon B_1\subset \R^2\to\R$ satisfying
\begin{equation}\label{eq:top}
    \begin{cases}
        \Delta u =0                 &\text{in }B_1\setminus\{x_2=0\},\\
        \Delta u\le0         &\text{in }\{x_2=0\}\cap B_1\\
        \partial_{x_2}u=0           &\text{in }\{x_2=0\}\cap\{u>\varphi\}\cap B_1,\\
        u\ge\varphi                 &\text{in }\{x_2=0\}\cap B_1.
    \end{cases}
\end{equation}
We will denote the contact set of $u$ by $\Lambda(u)\coloneqq\{u=\varphi\}\cap\{x_2=0\}$ and the free boundary by $\Gamma(u)=\partial_\R\Lambda(u)$ (i.e., the topological boundary of $\Lambda(u)$ as a subset of $\{x_2=0\}\simeq \R$).
A point $x_0\in\Gamma(u)$ is regular ($x_0\in\Reg(u)$) provided there exists $c>0$ such that
\[|u(x_0+r\cdot)|\ge cr^{3/2}\quad\text{in }B_1, \quad \forall\,\,r<1,\]
and it is singular ($x_0\in\Sing(u)$) if there exists $C>0$ such that 
\begin{equation}\label{eq:sing-points}|u(x_0+r\cdot)-\varphi(x_0+r\cdot)|\le Cr^2\quad\text{in }B_1, \quad \forall\,\,r<1,\end{equation}
where we mean $\varphi(x_1,x_2)=\varphi(x_1)$.

\begin{defn}\label{defn:regular-solution-top}
    A solution $u$ of~\eqref{eq:top} is \emph{regular} if there is no point in $\Gamma(u)$ where~\eqref{eq:sing-points} holds.
\end{defn}

We recall that solutions to the thin obstacle problem are $C^{1,1/2}$ (see~\cite{AthCaff04,ruland2017optimal}).
\begin{lem}\label{lem:regularity-top}
    Let $u$ solve~\eqref{eq:top} in $B_1\subset \R^2$ with obstacle $\varphi\in C^{1,\alpha}([-1,1])$ for some $\alpha>\frac12$. Then $u(\cdot,0)\in C^{1,1/2}_{\rm loc}((-1,1))$. In particular, there is $C>0$ depending on $\|\varphi\|_{C^{1,1}}$ such that $\|u(\cdot,0)\|_{C^{1,1/2}([-1/2,1/2])}\le C\|u\|_{L^2(B_1)}$.
\end{lem}

While singular points may exist, it has been recently proven that, in low dimensions, the singular set is generically empty.
More precisely, consider  a continuous $1$-parameter family of solutions $\{u_t\}_{t\in[-1,1]}$ to~\eqref{eq:top} in $B_1$, strictly increasing in the following sense:
there exists $\eta>0$ such that 
\begin{equation}\label{eq:strictMonotonicity_HP_TOP}\begin{cases}u_{t+\eps}\ge u_t    &\text{in }\partial B_1\\
    u_{t+\eps}\ge u_t + \eta\eps       &\text{in }\partial B_1\cap\{|x_2|>\frac12\}
\end{cases}\end{equation}
for every $\eps>0$.

\begin{teo}[Generic regularity for the thin obstacle problem, see~\cite{FernandezRealRosOton,FERNANDEZREAL2023109323}]\label{teo:genericTOP}
    Let $\varphi\in C^{2,\alpha}([-1,1])$ for some $0<\alpha\le1$ and let $u_t:B_1\to \R$ be a family of solutions of~\eqref{eq:top} satisfying~\eqref{eq:strictMonotonicity_HP_TOP}.
    Then, for a.e.  $t\in[-1,1]$, 
    \begin{itemize}
    \item[\textit{i)}] $\Sing(u_t)=\emptyset$;
    \item[\textit{ii)}] $\Gamma(u_t)\cap [-1/2,1/2]$ is a finite set.
    \end{itemize}
\end{teo}

We also recall two useful properties: limits of singular points are singular, and solutions with smoother obstacles are more regular.
\begin{lem}[Convergence of singular points, see~\cite{FernandezRealRosOton}]\label{lem:frequency-monotonicity}
    Let $u_k,u$ solve~\eqref{eq:top} in $B_1$ with obstacles $\varphi_k,\varphi\in C^{1,1}([-1,1])$.
    Assume that
    \begin{itemize}
        \item[\textit{i)}] $u_k\to u, \varphi_k\to \varphi$ uniformly, and $\sup_{k}\|\varphi_k\|_{C^{1,1}([-1,1])}<+\infty$;
        \item[\textit{ii)}] there are points $x_k\in\Sing(u_k)$ with $x_k\to x\in B_{1/2}$.
    \end{itemize}
    Then $x\in\Sing(u)$.
\end{lem}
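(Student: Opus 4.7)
The plan is to prove this stability result via Almgren's monotonicity formula, the standard tool for analyzing the fine structure of the thin obstacle free boundary. The guiding principle is that singular points are characterized by having Almgren frequency at least $2$ (as opposed to $3/2$ for regular points), and this quantity passes well to the limit under the assumed convergence.

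First I would reduce to an essentially zero-obstacle problem via straightening. Extend each $\varphi_k$ to an even-in-$x_2$ function $\Phi_k$ on $B_1$ with $\|\Phi_k\|_{C^{1,1}(B_1)} \leq C\,\|\varphi_k\|_{C^{1,1}([-1,1])}$ (for instance the even reflection of the harmonic extension), and set $w_k := u_k - \Phi_k$. Then $w_k$ solves the thin obstacle problem with zero obstacle plus a uniformly bounded $L^\infty$ term in its Laplacian, and the contact and singular sets coincide with those of $u_k$. Analogously set $w := u - \Phi$; by hypothesis $w_k \to w$ uniformly on $B_1$.

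Next I would invoke Almgren's (truncated) frequency $N(r,x_0,w) = \frac{r\int_{B_r(x_0)}|\nabla w|^2}{\int_{\partial B_r(x_0)} w^2}$, which in the perturbed setting is essentially non-decreasing in $r$ with an error depending only on the uniform $C^{1,1}$ bound of the obstacle. The blow-up classification gives the dichotomy at free boundary points: $N(0^+,x_0,w) = 3/2$ at regular points, while $N(0^+,x_0,w) \geq 2$ corresponds exactly to the quadratic-decay estimate defining singular points in the excerpt. Since $x_k \in \Sing(w_k)$, monotonicity yields $N(r,x_k,w_k) \geq 2$ for every $r \in (0,r_0)$. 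For fixed $r > 0$, the uniform convergence $w_k \to w$ together with $x_k \to x$ gives $N(r,x_k,w_k) \to N(r,x,w)$, so $N(r,x,w) \geq 2$ for every small $r$; letting $r \to 0^+$ and using monotonicity once more yields $N(0^+,x,w) \geq 2$, hence $x \in \Sing(w) = \Sing(u)$.

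The main technical point is to ensure that the error terms in the almost-monotonicity of $N$ and in its continuity with respect to $(x_0,w)$ at fixed $r$ are controlled \emph{uniformly in} $k$; this is precisely what hypothesis~(i) of the lemma, namely $\sup_k \|\varphi_k\|_{C^{1,1}} < \infty$, provides. All three ingredients---almost-monotonicity of $N$, its continuity at fixed $r$, and the characterization $x_0 \in \Sing \iff N(0^+,x_0,\cdot) \geq 2$---are by now classical and are developed in detail in \cite[Chapter 9]{Petrosyan2012} and in the works \cite{FernandezRealRosOton, FERNANDEZREAL2023109323} cited in the statement, so the argument is mostly a careful assembly of these ingredients rather than new analysis.
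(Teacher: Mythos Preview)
The paper does not actually prove this lemma: it is stated in Appendix~\ref{sect:TOP} as a known result and attributed to~\cite{FernandezRealRosOton}, with no proof given. Your outline via Almgren's (truncated) frequency is precisely the standard argument behind that citation---indeed the label \texttt{lem:frequency-monotonicity} already signals this---so the approach is the right one.

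One small point worth tightening: to pass $N(r,x_k,w_k)\to N(r,x,w)$ at fixed $r$ you need more than uniform convergence of $w_k$, since the numerator involves $\int|\nabla w_k|^2$. This follows because the uniform $C^{1,1}$ bound on the obstacles together with local $L^\infty$ bounds on the $u_k$ yield uniform interior $C^{1,1/2}$ (or at least $H^1_{\rm loc}$) estimates for solutions of~\eqref{eq:top}, hence strong $H^1$ convergence on $B_r(x)$; you should state this explicitly rather than leave it implicit.
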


\begin{teo}[Higher order boundary Harnack, {see~\cite[Theorem 2.3]{de_silva_savin_2016}}]\label{teo:boundary-harnack}
    Let $u$ solve
    \[
    \begin{cases}
        |\Delta u|\le C |x|^{k+\alpha-3/2} &\text{in }B_1\setminus \{x_2=0,x_1\le0\},\\
        u=0&\text{in }\{x_2=0,x_1\le0\}.
    \end{cases}
    \]
    for some $k\ge1$ and $\alpha>0$.

    Then there exist a polynomial $Q$ of degree $k$ and a constant $C'>0$ such that
    \[
    \sup_{B_r}|u-QU_0|\le C'r^{k+\alpha+1/2}\quad \text{ for all }0<r<1,
    \]
    where $U_0(\rho,\theta)=\rho^{1/2}\cos\theta/2$.
\end{teo}

\section{Phase Separation for a Two-Phase Problem}
\label{sec:phase separation}

As mentioned in the introduction, the goal of this section is to establish a crucial result that will later be used in Appendix~\ref{sec:double top} to prove generic regularity for \eqref{eq:ele_discrete}: in the two-phase thin obstacle problem, phases with opposite signs cannot touch. This phase separation was already stated in \cite[Proposition 4.1]{Caffarelli2020}, but the proof given there contains a gap.\footnote{The proof of \cite[Proposition 4.1]{Caffarelli2020} relies essentially on \cite[Lemmas3.5 and 3.6]{Caffarelli2020}, but their argument is incorrect. In particular, \cite[Equation (3.16)]{Caffarelli2020} is wrong, since the function considered there is not super-harmonic and therefore the maximum principle does not apply.} For this reason, in this appendix we work in a more general framework that is both suitable for our purposes and provides a new proof of \cite[Proposition4.1]{Caffarelli2020}.

More precisely, we consider the problem in $d+1$ dimensions, with $d \geq 1$ an integer, and study the slightly more general formulation \eqref{eq:double top} below. A related phase separation result was also established in \cite{Allen15}, where the two-phase obstacle problem for the fractional Laplacian was studied. However, that result is not sufficient for either \eqref{eq:ele_discrete} or the setting of \cite{Caffarelli2020}. More specifically, \cite[Theorem 9.1]{Allen15} coincides (in the case $a=0$) with Theorem~\ref{teo:phase separation}, but only in the special case $f \equiv \theta$ and $\varphi \equiv 0$. While their argument could probably be adapted also to our setting, we present here a new independent proof,which we believe is of independent interest.

\smallskip 

We denote points in $\R^{d+1}$ with $(x,y)\in\R^d\times\R$.
Given a set $S\subset\R^{d+1}$ we write
\[
    S_+=S\cap\{y>0\}\quad\text{and}\quad S'=S\cap\{y=0\}.
\]
We fix functions $\varphi\colon B_1'\to\R$ of class $C^{1,1}$ and $f\colon [0,+\infty)\to(0,+\infty)$ satisfying:
\begin{enumerate}[label=\alph*)]
    \item\label{item:assumption a} $f(s)\le f(0)\eqcolon\theta$ for all $s\in [0,+\infty)$;
    \item\label{item:assumption b} $\|f'\|_{L^\infty(0,+\infty)}<+\infty$;
\end{enumerate}
and we study solutions $u\in H^1(B_1^+)$ of
\begin{equation}\label{eq:double top}\begin{cases}
    \Delta u=0  &\text{in }B_1^+,\\
    |\partial_yu|\le\theta\coloneqq f(0)  &\text{in }\{y=0\}\cap B_1,\\
    \partial_yu=f(|u-\varphi|)\sign(u-\varphi)  &\text{in }\{y=0,u\neq\varphi\}.
\end{cases}\end{equation}
When $d=1$ and $f(s)\equiv\theta$, this problem corresponds to \eqref{eq:ele_discrete} by replacing $u$ with $\frac1\pi u+\theta|y|$.
It has also been studied in \cite{Caffarelli2020} in the case $\varphi\equiv0$. In \cite{Allen15} it has been studied in the case $\varphi\equiv0$ and $f\equiv\theta$ but with the fractional laplacian $(-\Delta)^s$ for all $s\in(0,1)$.

The main challenge in the study of \eqref{eq:double top} is to exclude the existence of ``two phase points'', namely points in the set
\[
    \Sigma_{\mathrm{two\,phase}}\coloneqq\partial\{u(\cdot,0)>\varphi\}\cap\partial\{u(\cdot,0)<\varphi\},
\]
as they correspond to jumps in the normal derivative. The main result of this section is a positive answer to this question.
\begin{teo}\label{teo:phase separation}
    Let $u\in H^1(B_1^+)$ solve \eqref{eq:double top} with $\varphi\in C^{1,1}$ and $f$ satisfying assumptions \ref{item:assumption a} and \ref{item:assumption b}. Then $\Sigma_{\mathrm{two\,phase}}=\emptyset$. More precisely, there is $r>0$ depending only on $d,\theta,\|u\|_{L^2(B_1^+)},\|f'\|_\infty$, and $\|\varphi\|_{C^{1,1}}$ such that, for all $x\in B_{1/2}'$, either $u(\cdot,0)\ge\varphi$ or $u(\cdot,0)\le\varphi$ in $B_r(x)'$.
\end{teo}
Once phase separation is established, it is possible to adapt techniques of the one-phase thin obstacle problem to show optimal regularity (namely, $C^{1,1/2}$) of the solutions as well as regularity of the free boundary (that is, the set $\partial\{u(\cdot,0)\neq\varphi\}$) close to non-degenerate points.
We refer to \cite{Caffarelli2020} and references therein for more details on the connection between phase separation and regularity in the case $\varphi\equiv0$, as well as other two-phase problems in the literature where phase separation holds.
In Appendix~\ref{sec:double top} we prove optimal regularity of the solutions and generic regularity of the free boundary when $f\equiv\theta$ and $d=1$.

The strategy to prove Theorem~\ref{teo:phase separation} is as follows: we use a Bernstein-type technique to show local Lipschitz regularity of the solutions and we exploit the ACF monotonicity formula to prove that Lipschitz solutions meet the obstacle tangentially at contact points (that is, in the set $\{u(\cdot,0)=\varphi\}$). We then conclude with a barrier argument borrowed from~\cite{Caffarelli2020}.

\subsection{Local Lipschitz Regularity}
Note that \eqref{eq:double top} implies
\begin{equation}\label{eq:consequence pde}
    |\partial_yu(x,0)|\le f(|u(x,0)-\varphi(x)|)\quad \forall\, x\in B_1'.
\end{equation}
Here $\nabla_xu$ (resp. $\Delta_xu$) denotes the gradient (resp. the laplacian) of $u$ with respect to the $x$ variable, while $\nabla u$ (resp. $\Delta u$) denotes the full gradient (resp. the full laplacian) of $u$.

\begin{lem}\label{lem:tangential Lip}
    There is $C>0$, depending only on $n$ and $\|f'\|_\infty$, such that if $u\in H^1(B_1^+)$ solves $\eqref{eq:double top}$ then
    \[
        |\nabla_xu|\le C\big(\|\nabla_xu\|_{L^2(B_1^+)}+\|\nabla\varphi\|_\infty\big)\quad\text{in }B_{1/2}^+.
    \]
\end{lem}
\begin{proof}
    Given $h\in\R^d$ with $|h|<1/3$, we set
    \[
        v=\frac{u-u(\cdot+h,\cdot)}{|h|}-\|\nabla\varphi\|_\infty,
    \]
    where $u$ is extended by even reflection to $\{y<0\}$.
    We claim that
    \begin{equation}\label{eq:pde derivatives}
        \Delta v_+\ge -2\|f'\|_\infty(v_++2\|\nabla\varphi\|_\infty)\cH^d\mres\{y=0\}\quad \text{in }B_{2/3}.
    \end{equation}
    Since
    \[
        \Delta (u-u(\cdot+h,\cdot))=2(\partial_yu-\partial_yu(\cdot+h,0))\cH^d\mres\{y=0\}\quad \text{in }B_{2/3},
    \]
    it is sufficient to check
    \[
        \partial_yu(\cdot,0)-\partial_yu(\cdot+h,0)\ge -\|f'\|_\infty|h|(v+2\|\nabla\varphi\|_\infty)\quad\text{in }\{y=0,v>0\}.
    \]
    Given $x\in B_1'$ such that $u(x,0)>u(x+h,0)+\|\nabla\varphi\|_\infty|h|$ we split the proof in three cases:

    \noindent $\bullet$ if $u(x+h,0)\ge\varphi(x+h)$ then $u(x,0)>\varphi(x+h)+\|\nabla\varphi\|_\infty|h|\ge\varphi(x)$, so \eqref{eq:double top} and \eqref{eq:consequence pde} yield
    \[\begin{split}
        \partial_yu(x,0)-\partial_yu(x+h,0)&\ge f(u(x,0)-\varphi(x))-f(u(x+h,0)-\varphi(x+h))\\
        &\ge-\|f'\|_\infty|u(x,0)-\varphi(x)-u(x+h,0)+\varphi(x+h)|\\
        &\ge-\|f'\|_\infty|h|(v+2\|\nabla\varphi\|_\infty);
    \end{split}\]
    
    \noindent$\bullet$ if $u(x+h,0)<\varphi(x+h)$ and $u(x,0)>\varphi(x)$, then
    \[
        \partial_yu(x,0)-\partial_yu(x+h,0)=f(u(x,0)-\varphi(x))+f(-u(x+h,0)+\varphi(x+h))\ge0;
    \]

    \noindent$\bullet$ if $u(x+h,0)<\varphi(x+h)$ and $u(x,0)\le\varphi(x)$, then
    \[\begin{split}
        \partial_yu(x,0)-\partial_yu(x+h,0)&\ge -f(-u(x,0)+\varphi(x))+f(-u(x+h,0)+\varphi(x+h))\\
        &\ge -\|f'\|_\infty|\varphi(x)-u(x,0)-\varphi(x+h)+u(x+h,0)|\\
        &\ge -\|f'\|_\infty|h|(v+2\|\nabla\varphi\|_\infty).
    \end{split}\]
    Thus, \eqref{eq:pde derivatives} holds.

    It follows that the function
    \[
        w=(1+\|f'\|_\infty|y|)(v_+ + 2\|f'\|_\infty\|\nabla\varphi\|_\infty|y|)
    \]
    satisfies
    \[
    \Delta w -\frac{2\|f'\|_\infty\sign(y)}{1+\|f'\|_\infty|y|}w_y+\frac{2\|f'\|_\infty^2}{(1+\|f'\|_\infty|y|)^2}w\ge0\quad\text{in }B_{2/3},
    \]
    hence we can apply \cite[Theorem 8.17]{GilbargTrudinger} to find
    \[
        \sup_{B_{1/2}}w\le C\|w\|_{L^2(B_1)}\le C(\|\nabla_xu\|_{L^2(B_1)}+\|\nabla\varphi\|_\infty).
    \]
    In particular
    \[
        \frac{u(x,y)-u(x+h,y)}{|h|}-\|\nabla\varphi\|_\infty\le C(\|\nabla_xu\|_{L^2(B_1)}+\|\nabla\varphi\|_\infty)\quad \text{for }(x,y)\in B_{1/2}.
    \]
    The same argument applied to $-u$ in place of $u$ yields
    \[
        \frac{u(x+h,y)-u(x,y)}{|h|}-\|\nabla\varphi\|_\infty\le C(\|\nabla_xu\|_{L^2(B_1)}+\|\nabla\varphi\|_\infty)\quad \text{for }(x,y)\in B_{1/2},
    \]
    and the result follows.
\end{proof}

\begin{lem}\label{lem:normal Lip}
    If $u\in H^1(B_1^+)$ solves \eqref{eq:double top} then
    \[
        |\partial_yu|\le\theta+Cy\quad\text{in }B_{1/2}^+,
    \]
    for some $C>0$ depending only on $d$ and $\|\nabla u\|_{L^2(B_1^+)}$.
\end{lem}
\begin{proof}
    We set $v(x,y)=u(x,y)-\varphi(x)$, so that $v\in H^1(B_1^+)$ satisfies
    \[\begin{cases}
        \Delta v =-\Delta\varphi(x)\eqcolon g(x)    &\text{in }B_1^+,\\
        \partial_y v=\sign(v)f(|v|) &\text{in }\{y=0,v\neq0\},\\
        |\partial_yv|\le\theta  &\text{in }\{y=0\},
    \end{cases}
    \]
    with $g\in L^\infty(B_1)$.
    Given $\delta>0$ small, let $h\in\R^d$ with $|h|<\delta$.
    Given a function $\psi\colon B_1\to\R$ we define $\delta_h\psi\colon B_{1-\delta}\to\R$ by $\delta_h\psi\coloneqq\frac1{|h|}(\psi(x+h,y)-\psi(x,y))$. Extending $v$ to $B_1$ by even reflection in $y$ we define
    \[
        w\coloneqq \delta_hv\quad \text{in } B_{1-\delta}.
    \]
    We note that
    \begin{equation}\label{eq:pde derivatives caccioppoli}
        w\Delta w\ge w\,\delta_hg-2\|f'\|_\infty w^2\cH^d\mres\{y=0\}\quad\text{in }B_{1-\delta},
    \end{equation}
    as can be shown arguing as in the proof of \eqref{eq:pde derivatives}.
    
    We now fix a smooth cutoff $\eta\in C^\infty_c(B_{1-\delta})$ with $0\le \eta\le1$ and $\eta\equiv1$ in $B_{1-2\delta}$.
    Using \eqref{eq:pde derivatives caccioppoli}, the identity $\Delta (w^2)/2=|\nabla w|^2+w\Delta w,$ and $\Delta |y| = 2\cH^d\mres\{y=0\}$, we compute
    \[\begin{split}
        \int_{B_1}\eta|\nabla w|^2&\le \int_{B_1}\eta\Delta(w^2/2)-\eta w\,\delta_hg+\|f'\|_\infty\eta w^2\Delta(|y|)\\
        &=\int_{B_1}[\Delta(\eta/2)-\|f'\|_\infty\nabla(\eta)\cdot\nabla(|y|)]w^2-2\|f'\|_\infty\eta w\nabla w\cdot\nabla(|y|)+g\,\delta_{-h}(\eta w)\\
        &\le\int_{B_1} Cw^2 + \tfrac12\eta|\nabla w|^2+C\|g\|_2^2.
    \end{split}\]
Thus, recalling that $w=\delta_h v$,
    \[
        \int_{B_{1-2\delta}}|\delta_h\nabla v|^2\le C\int_{B_1}|\nabla v|^2+C\|g\|_2^2.
    \]
    Using this and $\partial_{yy}v = g-\Delta_x v$, we deduce that $v\in H^2(B_r^+)$ for all $r\in(0,1)$. In particular $\partial_yu=\partial_yv\in H^1(B_r^+)$ for all $r\in(0,1)$.
    
    It follows that if we extend $u$ by even reflection to $\{y<0\}$, the function $|\partial_yu|$ belongs to $H^1_\loc(B_1)$, satisfies $|\partial_yu|\le\theta$ in $\{y=0\}$, and is subharmonic in $B_1\setminus\{y=0\}$, therefore $\max\{\theta,|\partial_yu|\}$ is subharmonic in $B_1$.
    In particular, $|\partial_yu|\le M$ in $B_{2/3}^+$ for some $M$ depending only on $n,\theta$ and $\|\nabla u\|_{L^2(B_1^+)}$. As $|\partial_yu|\le\theta$ in $B_1'$, denoting by $\psi$ the solution of
    \[\begin{cases}
        \Delta \psi=0  &\text{in }B_{2/3}^+,\\
        \psi=\theta    &\text{in }B_{2/3}',\\
        \psi=M         &\text{in }\partial B_{2/3},
    \end{cases}\]
    we deduce from elliptic regularity and the maximum principle that
    \[
        |\partial_yu|\le \psi\le \theta+Cy\quad\text{in }B_{1/2}^+
    \]
    for some $C>0$ depending only on $n$ and $M$, as we wanted.
\end{proof}

\begin{lem}\label{lem:lip double top}
    There exists $C>0$, depending only on $d,\theta,\|u\|_{L^2(B_1^+)},\|f'\|_\infty$, and $\|\varphi\|_{C^1}$, such that if $u\in H^1(B_1^+)$ solves \eqref{eq:double top} then
    \[
        |\nabla u|\le C\quad \text{in }B_{1/2}^+.
    \]
\end{lem}
\begin{proof}
    By Lemmas~\ref{lem:normal Lip} and \ref{lem:tangential Lip} together with a covering argument, it suffices to bound $\|\nabla u\|_{L^2(B_{2/3}^+)}$. Fix a smooth cutoff function $\eta\in C_c^\infty(B_1)$ such that $0\le \eta$ and $\eta\equiv1$ in $B_{2/3}$.
    As $u\partial_yu \ge -\theta |\varphi|$ in $B_1'$, we compute
    \[\begin{split}
        \int_{B_1^+}\eta^2|\nabla u|^2 &= -\int_{B_1'}\eta^2u\partial_yu-2\int_{B_1^+}\eta u\nabla\eta\cdot\nabla u\\
        &\le 2\int_{B_1^+}|\nabla\eta|^2u^2+\frac12\int_{B_1^+}\eta^2|\nabla u|^2 + C\|\varphi\|_{L^\infty}
    \end{split}\]
    which yields
    \[
        \|\nabla u\|^2_{L^2(B_{2/3}^+)}\le C\|u\|^2_{L^2(B_1^+)}+C\|\varphi\|_{L^\infty},
    \]
    so the result follows.
\end{proof}

\subsection{Lipschitz Global Solutions}
We now consider Lipschitz functions $u\colon \R^{d+1}_+\to\R$ satisfying
\begin{equation}\label{eq:double top constant coefficients}\begin{cases}
    \Delta u=0  &\text{in }\R^d\times(0,+\infty),\\
    |\partial_yu|\le \theta&\text{in }\R^d\times[0,+\infty),\\
    \partial_y u(\cdot,0)=\theta \sign(u)   &\text{in }\{u(\cdot,0)\neq0\},
\end{cases}\end{equation}
which will naturally arise from blow-ups of solutions of \eqref{eq:double top}. The goal of this section is to show that there are no nontrivial blow-ups.
\begin{prop}[Classification of Lipschitz global solutions]\label{prop:global solutions}
    Let $u$ be a Lipschitz global solution of \eqref{eq:double top constant coefficients} with $u(0)=0$.
    Then $u(\cdot,0)\equiv0$.
\end{prop}
This result is~\cite[Lemma 8.3, $a=0$]{Allen15}. Here we give a different proof.
We first recall the same result for solutions of the thin obstacle problem (cf. Appendix~\ref{sect:TOP}).
\begin{lem}\label{lem:global lipschitz Signorini}
    Let $u\colon \R^{d+1}\to \R$ be a Lipschitz global solution of the thin obstacle problem \eqref{eq:top} with $u(0)=0$. Then $u(\cdot,0)\equiv0$.
\end{lem}
\begin{proof}
    By regularity for the thin obstacle problem (see e.g. \cite[Theorem 9.13]{Petrosyan2012})
    \[
        [u(\cdot,0)]_{C^{1,1/2}(B_{1/2})}\le C\|\nabla u\|_{L^{\infty}(B_1)}.
    \]
    Applying this to
    \[
        u_R(x,y)= \tfrac1Ru(Rx,Ry)
    \]
    and letting $R\to+\infty$ yields
    \[
        [u(\cdot,0)]_{C^{1,1/2}(B_{R/2})} \le  C{R^{-1/2}}\|\nabla u\|_{L^{\infty}(\R^{d+1})}\to0,
    \]
    which implies the claim.
\end{proof}

We will also make use of the ACF monotonicity formula, that we state in our setup.
\begin{lem}[{\cite[Theorems 2.4 and 2.9]{Petrosyan2012}}]\label{lem:ACF}
    Let $u_\pm$ be a pair of continuous functions such that
    \[
        u_\pm\ge0,\quad\Delta u_\pm\ge0,\quad u_+\cdot u_-=0\quad\text{in }B_1.
    \]
    Then the functional
    \[
        r\mapsto \Phi(r)=\Phi(r,u_+,u_-)\coloneqq \frac1{r^4}\int_{B_r}\frac{|\nabla u_+|^2}{|(x,y)|^{d-1}}\,dx\,dy\int_{B_r}\frac{|\nabla u_-|^2}{|(x,y)|^{d-1}}\,dx\,dy
    \]
    is nondecreasing for $0<r<1$.

    Suppose in addition that $\Phi(r_1)=\Phi(r_2)$ for some $0<r_1<r_2<1$. Then one the following holds:
    \begin{enumerate}[label=\roman*)]
        \item either $u_+\equiv0$ in $B_{r_2}$ or $u_-\equiv0$ in $B_{r_2}$;
        \item or there exist a unit vector $e \in \R^{d+1}$ and constants $k_\pm>0$ such that
        \[
            u_+(x,y)=k_+((x,y)\cdot e)_+,\quad u_-(x,y)=k_-((x,y)\cdot e)_-\quad\text{in }B_{r_2}.
        \]
    \end{enumerate}
\end{lem}

\begin{proof}[Proof of Proposition~\ref{prop:global solutions}]
    Given a Lipschitz global solution $u$ of \eqref{eq:double top constant coefficients}, we identify it with its even extension with respect to $y$. Note that
    \[
        \Delta u= 2f(|u|)\cH^{d}\mres\{y=0\}\ge0\quad \text{in }\R^{d+1}\setminus\{y=0,u(\cdot,0)\le0\}.
    \]
    Since $u$ is continuous, it follows that the functions $u_\pm$ satisfy the assumptions of Lemma \ref{lem:ACF}.
    Thus, if we define
    \[
        \Phi(r,u)\coloneqq \Phi(r,u_+,u_-)
    \]
    Lemma~\ref{lem:ACF} together with the global Lipschitz regularity of $u$ imply that the following limit exists and is finite:
    \[
        \ell=\lim_{R\to+\infty}\Phi(R,u)\in[0,+\infty)
    \]
    Consider now a blow-down $u_\infty$ of $u$, namely, up to a subsequence,
    \[
        u_R(x,y)\coloneqq \tfrac 1Ru(Rx,Ry)\overset{R\to+\infty}{\longrightarrow} u_\infty(x,y) \quad\text{weakly in $H^1_\loc(\R^{d+1})$ and locally uniformly.}
    \]
    It is easy to see that $u_\infty$ is a Lipschitz global solution of \eqref{eq:double top constant coefficients}, even in the $y$ variable.
    In addition, since $|(x,y)|^{1-d}$ is locally integrable and the functions $u_R$ are equi-Lipschitz, there is a modulus of continuity $\omega_r$ such that
    \[
        \int_{B_r\cap\{|y|<\eps\}}\frac{|\nabla (u_R)_\pm|^2}{|(x,y)|^{d-1}}\,dx\,dy<\omega_r(\eps).
    \]
    Moreover, since $u_R,u_\infty$ are harmonic outside $\{y=0\}$, for all $r,\eps>0$
    \[
        \int_{B_r\cap\{|y|>\eps\}}\frac{|\nabla (u_R)_\pm|^2}{|(x,y)|^{d-1}}\,dx\,dy\to \int_{B_r\cap\{|y|>\eps\}}\frac{|\nabla (u_\infty)_\pm|^2}{|(x,y)|^{d-1}}\,dx\,dy.
    \]
    Combining these two bounds, we  obtain
    \[
        \Phi(r,u_R)\to\Phi(r,u_\infty)\quad\forall\, r>0,\quad \text{as }R\to+\infty.
    \]
    Since $\Phi(Rr,u)=\Phi(r,u_R)$, letting $R\to+\infty$ yields
    \[
        \Phi(r,u_\infty)=\ell\quad\forall\, r>0.
    \]
    Thus, by the second part of Lemma~\ref{lem:ACF},
    \begin{enumerate}[label=\alph*)]
        \item either $u_\infty$ has a sign in $B_{r_2}$,
        \item or there exist a unit vector $e$ and constants $k_\pm>0$ such that
        \begin{equation}\label{eq:proof acf}
            (u_\infty)_\pm(x,y)=k_\pm((x,y)\cdot e)_\pm\quad\text{in }B_{r_2}.
        \end{equation}
    \end{enumerate}
    However, case b) is incompatible with \eqref{eq:double top constant coefficients}.
    Indeed, since $u$ is even in the $y$ variable, also the set $\{u>0\}$ must be even with respect to the reflection across the plane $\{y=0\}$.
    However, since the set $\{u>0\}=\{(x,y)\cdot e>0\}$ is a hyperplane, this implies $e\cdot \mathbf e_{d+1}=0$, where $\mathbf e_{d+1}$ is the $(d+1)$-th vector of the canonical Euclidean base of $\R^{d+1}$.
    In particular, $\partial_y u\equiv0$. Since $\partial_y u(\cdot,0)=\theta \sign(u)$ when $u(\cdot,0)\neq0$, the only possibility is that $u(\cdot,0)\equiv0$, in contrast with \eqref{eq:proof acf}.\\
    Since case a) holds, this implies in particular that $\ell=0$.
    Thanks to the monotonicity of $\Phi(r,u)$, this gives
    \[
        \Phi(r,u)\equiv0\quad\forall\, r>0,
    \]
    and therefore (using again
    the second part of Lemma~\ref{lem:ACF}) $u$ has a sign. Assume without loss of generality that $u\ge0$ and consider the function
    \[
        v(x,y) = u-\theta|y|.
    \]
    It follows from \eqref{eq:double top constant coefficients} and the fact that $u\ge0$ that $v$ is a global Lipschitz solution of the thin obstacle problem in $\R^{d+1}$ with $v(0)=0$. Thus, we conclude by applying Lemma~\ref{lem:global lipschitz Signorini}.
\end{proof}

\subsection{Proof of Phase Separation}
We prove Theorem~\ref{teo:phase separation}.
We borrow the following barrier argument from~\cite{Caffarelli2020}.

\begin{lem}\label{lem:barrier lemma}
There exist dimensional constants $\delta,\eta>0$ such that the following holds:

Let $u$ solve~\eqref{eq:double top} in $B_1'\times(0,1)$ and assume that:
\begin{enumerate}[label=\roman*)]
    \item $\|f'\|_\infty<\delta$;
    \item $\varphi(0)=|\nabla\varphi(0)|=0$ and $|D^2\varphi|<\delta$;
    \item $u\le\delta$ in $B_1'$ and $|\partial_y u|\leq \theta+\delta y$ in $B_1'\times(0,1)$;
    \item there is $\hat z\in B_{1/100}'$ satisfying $u(\hat z,0)>\varphi(\hat z)$.
\end{enumerate}
Then
\[
    \partial_yu\ge \tfrac12\theta\quad\text{in}\quad\left\{y=\eta,|x|\le 1/2\right\}.
\]
\end{lem}
\begin{proof}
    Up to considering $u(x,y)-\varphi(x)$ and relabeling $\delta$, we can assume that $u$ satisfies
    \[\begin{cases}
        \Delta u=g(x)   &\text{in }B_1'\times(0,1),\\
        |\partial_yu|\le\theta  &\text{in }\{y=0\},\\
        \partial_yu=\sign(u)f(|u|)  &\text{in }\{y=0,u\neq0\}
    \end{cases}\]
    where 
    \[
        |g|\le\delta,\qquad u\le \delta\quad\text{in }B_1',\qquad|\partial_yu|\le\theta+\delta y\quad\text{in }B_1'\times(0,1).
    \]
    Given $M\ge1$ we define
    \[
        V(x,y)\coloneqq u(x,y)-M\delta|x-\hat z|^2 + (nM+1/2)\delta y^2-(\theta-\delta)y,
    \]
    and for $0<\eta\le1/100$ we set $\Gamma\coloneqq B_1'\times[0,\eta]$. Since
    \[
        \Delta V = \Delta u+\delta\ge0\quad\text{in }\Gamma,
    \]
    by the maximum principle there is $(\bar x,\bar y)\in\partial\Gamma$ such that
    \begin{equation}\label{eq:proof-barrier-phaseSeparation-maxPrinc}
        V(\bar x,\bar y) = \max_{\partial \Gamma} V = \max_\Gamma V \ge V(\hat z,0)= u(\hat z,0)>0.
    \end{equation}
    Provided $M$ is large enough and $\delta,\eta$ are small enough, we show that 
    \begin{equation}\label{eq:proof barrier maximizer}
        \bar y=\eta\quad\text{and}\quad |\bar x|\le1/2.
    \end{equation}
    We argue by contradiction, splitting the proof in two cases.

    \noindent $\bullet$  \textit{$\bar y=0$.} In this case, by \eqref{eq:proof-barrier-phaseSeparation-maxPrinc}, we have
    \[
        0<V(\bar x,0)=u(\bar x,0)-M\delta|\bar x-\hat z|^2\le u(\bar x,0),
    \]
    which implies
    \[
        \partial_yu(\bar x,0)=f(u(\bar x,0))\ge \theta -\|f'\|_\infty\sup_{B_1'} u\ge\theta -\delta^2.
    \]
    On the other hand,
    \[
        0\ge\partial_y V(\bar x,0) = \partial_y u(\bar x,0) -\theta + \delta,
    \]
    so we reach a contradiction for $\delta$ small enough.

    \noindent $\bullet$  \textit{$|\bar x|\geq 1/2$.}
    Since
    \[
        0<V(\bar x,\bar y) = u(\bar x,\bar y)-M\delta|\bar x-\hat z|^2+(nM+1/2)\delta\bar y^2 - (\theta -\delta)\bar y,
    \]
    recalling that $M\ge1, |\hat z|\le 1/100$, and $\bar y\le\eta$, we have
    \begin{multline*}
        u(\bar x,\bar y)> M\delta|\bar x-\hat z|^2 - (nM+1/2)\delta\bar y^2+(\theta-\delta)\bar y\\
        > (\tfrac M9-(nM+1/2)\eta^2-\eta)\delta +\theta\bar y
        \ge \tfrac M{10}\delta+ \theta\bar y
    \end{multline*}
    provided $\eta$ is small enough.
    On the other hand, since $|\partial_yu|\le \theta+\delta y$ and $u\le \delta$ in $\{y=0\}\cap B_1$, we get
    \[
        u(\bar x,\bar y)\le u(\bar x,0)+\theta\bar y + \tfrac\delta2\bar y^2\le (1+\eta^2/2)\delta+ \theta\bar y,
    \]
    which is impossible for $M$ large enough.
    Hence, we proved that \eqref{eq:proof barrier maximizer} holds.

  Now, we deduce from~\eqref{eq:proof-barrier-phaseSeparation-maxPrinc} and \eqref{eq:proof barrier maximizer} that
    \[
        0\le\partial_y V(\bar x,\eta) = \partial_y u(\bar x,\eta) +(2nM+1)\delta\eta - (\theta - \delta),
    \]
    therefore
    \begin{equation}\label{eq:barrierLemma-phaseSep-touchingPoint}
        \theta-\partial_y u(\bar x,\eta)\le C\delta.
    \end{equation}
    By assumption the function $\theta+2\delta\eta-\partial_yu$ is harmonic and positive in $B_1'\times(0,2\eta)$. Thus, by Harnack inequality, there exists $C>0$ such that, setting $\Gamma'=B_{1/2}'\times[\eta/2,3\eta/2]$, we have
    \[
    \sup_{\Gamma'} \big(\theta+2\delta\eta-\partial_yu\big) \le C\inf_{\Gamma'} \big(\theta+2\delta\eta-\partial_yu\big).
    \]
    Since $(\bar x,\eta)\in\Gamma'$ by \eqref{eq:proof barrier maximizer}, using~\eqref{eq:barrierLemma-phaseSep-touchingPoint} we have
    \[
    \sup_{\Gamma'} \big(\theta+2\delta\eta-\partial_yu\big)\le C\delta,
    \]
    therefore
    \[
        \partial_yu\ge \theta -C\delta\ge\tfrac12\theta\quad\text{ in }\big\{y=\eta,|x|\le 1/2\big\}
    \]
    provided $\delta$ is small enough, as we wanted.
\end{proof}

\begin{proof}[Proof of Theorem~\ref{teo:phase separation}]
    We argue by contradiction. Assume there are solutions $u_k$ with $u_k(0)=\varphi_k(0)$ and satisfying
    \[
        \|u_k\|_{L^2(B_1^+)}+\|\varphi_k\|_{C^{1,1}}+\|f_k'\|_\infty\le C,
    \]
    but such that there are $\rho_k\to0$ and $x_k,y_k\in B_{\rho_k}'$ with $u_k(x_k,0)>\varphi_k(x_k)$ and $u_k(y_k,0)<\varphi_k(y_k)$.
    Up to considering $u_k-\varphi_k(0)-x\cdot\nabla\varphi_k(0)$ we can assume $\varphi_k(0)=|\nabla\varphi_k(0)|=0$.

    Defining $u_{k,r}(x,y)\coloneqq \frac1ru_k(rx,ry)$ for $r\in(0,1)$, by Lemma~\ref{lem:lip double top}  there exists $C>0$ independent from $k,r$ such that $\|\nabla u_{k,r}\|_{L^\infty(B_{1/2r}^+)}\le C$.
    It follows that, up to a subsequence, 
    \[
        u_{k,r}\overset{r\to0}{\longrightarrow} u_{k,0}\quad\text{locally uniformly in }\R^d\times(0,+\infty).
    \]
    As the functions $u_{k,r}$ solve \eqref{eq:double top} with $f_{k,r}=f_k(r\cdot)$ and obstacle $\varphi_{k,r} = \frac 1{r}\varphi_k(r\cdot)$, we deduce that $u_{k,0}$ is a Lipschitz global solution of \eqref{eq:double top constant coefficients},
    hence $u_{k,0}(\cdot,0)\equiv0$ by Proposition~\ref{prop:global solutions}.
    
    This implies that for $r>0$ sufficiently small and for $k$ sufficiently large the functions $\pm u_{k,r}$ satisfy the assumptions of Lemma~\ref{lem:barrier lemma}. This yields
    \[
        \pm\partial_yu_k\ge \tfrac\theta2\quad\text{in }\{y=r\eta,|x|\le r/2\}
    \]
    for $k$ large enough, a contradiction.
\end{proof}

\section{Generic Regularity for a Two-phase Thin Obstacle}\label{sec:double top}
Let $\theta>0$ be a fixed parameter. We will consider differentiable obstacles $\varphi$ on $[-1,1]$ satisfying the analog of~\eqref{eq:derivativeBound-discrete}, namely
\begin{equation}\label{eq:derivativeBound-discrete2}
    |\varphi'(x)|\le C\big(|\log (x+1)|+|\log(1-x)|\big).
\end{equation}
We want to show a generic regularity result for~\eqref{eq:ele_discrete2}.
To this aim, we assume $K=[-1,1]$ and we consider solutions in $B_2$ of
\begin{equation}\label{eq:double top 2d}\begin{cases}
    -\Delta u=0 &\text{in }B_2\setminus \{x_2=0\},\\
    \partial_2u=\theta  &\text{in }\{x_2=0,|x_1|>1\},\\
    |\partial_2u|\le\theta    &\text{in }\{x_2=0,|x_1|\le 1\},\\
    \partial_2 u = \sign(u-\varphi)\theta   &\text{in }\{u\neq\varphi,|x_1|\le1,x_2=0\},\\
    u(x_1,-x_2)=u(x_1,x_2)  &\text{for }(x_1,x_2)\in\R^2,
\end{cases}\end{equation}
where $u$ is even in $x_2$ and the value $\partial_2u$ at $\{x_2=0\}$ is intended as the limit from the right, namely $$\partial_2u(x_1,0)=\lim_{t\to0^+}\frac{u(x_1,t)-u(x_1,0)}{t}.$$
We remark that $u$ solves~\eqref{eq:ele_discrete} in $B_2$ with $K=[-1,1]$ if and only if $\frac1\pi u+\theta |x_2|$ solves~\eqref{eq:double top 2d}.
Moreover, if $u$ solves~\eqref{eq:double top 2d} and $u\ge\varphi$ then $u-\theta |x_2|$ solves \eqref{eq:top}.
Finally, \eqref{eq:double top 2d} corresponds to \eqref{eq:double top} with $f\equiv\theta$ and $d=1$.

In this section we exploit the results from Appendices~\ref{sect:TOP} and \ref{sec:phase separation} to show optimal regularity and a generic regularity result for \eqref{eq:double top 2d}.

\subsection{Optimal regularity} 
Observe that the form of the obstacle~\eqref{eq:derivativeBound-discrete2} (namely, its low regularity at $\pm1$) allows for non-Lipschitz solutions at $(\pm1,0)$ (see the Remark below). Nevertheless, it follows from the results of Appendix~\ref{sec:phase separation} that solutions are Lipschitz in the interior of $K$.
\begin{oss}
    Consider $u=-\frac{\theta}{2\pi}\log|\cdot|*\chi_{(-1,1)}$. Then $u$ is a non-Lipschitz solution of~\eqref{eq:double top 2d} with $\varphi = u$ in $[-1,1]$. Also, $\varphi$ satisfies \eqref{eq:derivativeBound-discrete2} in $(-1,1)$. Note that Theorem~\ref{teo:generic_doubleTOP} implies that this behavior is not generic.
\end{oss}
In view of the previous remark, to prove optimal regularity we localize the problem considering solutions in $B_1$ of
\begin{equation}\label{eq:doubletop-localised}\begin{cases}
    -\Delta u=0 &\text{in }B_1\setminus\{x_2=0\},\\
    |\partial_2u|\le\theta    &\text{in }\{x_2=0\},\\
    \partial_2 u = \sign(u-\varphi)\theta   &\text{in }\{x_2=0,u(\cdot,0)\neq\varphi\}.\\
\end{cases}\end{equation}
As this corresponds to \eqref{eq:double top} with $f\equiv\theta$ and $d=1$, the results from Appendix~\ref{sec:phase separation} apply.
\begin{cor}\label{cor:optimal-regularity-doubleTop}
    Let $u$ be a solution of~\eqref{eq:doubletop-localised} with $\varphi\in C^{1,1}((-1,1))$. Then $u(\cdot,0)\in C^{1,1/2}_\loc((-1,1))$. More precisely, there exists $C>0$, depending only on $\theta,\|u\|_{L^2(B_1)}$, and $\|\varphi\|_{C^{1,1}}$, such that $\|u\|_{C^{1,1/2}(B_{1/2}^\pm)}\leq C$.
\end{cor}
\begin{proof}
    As $u-\theta|x_2|$ solves the thin obstacle problem \eqref{eq:top} in $B_r(x)$ when $u(\cdot,0)\ge\varphi$ in $B_r'(x)$, the result follows from Lemma~\ref{lem:regularity-top} and Theorem~\ref{teo:phase separation}.
\end{proof}

\subsection{Generic regularity} We consider a continuous family of solutions $\{u_t\}_{t\in[-1,1]}$ of \eqref{eq:double top 2d} in $B_2$, strictly increasing in the following sense:
\begin{equation}\label{eq:strictMonotonicity_HP_doubleTOP}
    \begin{cases}u_{t+\eps}\ge u_t      &\text{in }\partial B_2,\\
    u_{t+\eps}\ge u_t + \eta\eps        &\text{in }\partial B_2\cap\{|x_2|>1\},
\end{cases}\end{equation}
for some $\eta>0$ and every $-1\le t<t+\eps\le 1$.

We define regular solutions for \eqref{eq:double top 2d} in analogy with Definition~\ref{defn:regular-solution-top}. We remark that, due to the low regularity at $\pm1$ of the obstacle $\varphi$, we also include contact points in the singularities of $\varphi$.
\begin{defn}\label{defn:regular solution double top}
Given $x_0\in\{u(\cdot,0)=\varphi\}$ we say that $x_0\in\Sing^+(u)$ (respectively $x_0\in\Sing^-(u)$) if  $u(\cdot,0)\ge\varphi$ (resp. $u(\cdot,0)\le\varphi$) in a neighbourhood of $x_0$ and it is a singular point (in the sense of~\eqref{eq:sing-points}) for the function $u-\theta|x_2|$ (resp. $-u-\theta|x_2|$).\\
We say that $x_0$ is singular and we write $x_0\in\Sing(u)$ if:\\
- either $x_0\in \Sing^+(u)\cup\Sing^-(u)$;\\
- or if $x_0\in\{\pm1\}$ and $u(x_0,0)=\varphi(x_0)$.\\
We say that $u$ solving \eqref{eq:double top 2d} is \emph{regular} if there are no singular points.
\end{defn}

Note that
\begin{equation}\label{eq:observation singular points}
    \partial_2u(x_0,0)=\theta>0\quad\forall\, x_0\in\Sing^+(u)
\end{equation}
and similarly for $x_0\in\Sing^-(u)$.

\begin{teo}\label{teo:generic_doubleTOP}
    Assume $\varphi\in C^{2,\alpha}$ and let $u_t$ be a continuous family of solutions of~\eqref{eq:double top 2d} satisfying~\eqref{eq:strictMonotonicity_HP_doubleTOP}.
    Then, for a.e.  $t\in[-1,1]$, 
    \begin{itemize}
    \item[\textit{i)}] $\Sing(u_t)=\emptyset$;
    \item[\textit{ii)}] $\partial\{u_t(\cdot,0)\neq\varphi\}$ is a finite set.
    \end{itemize}
\end{teo}
We will make use of the following fact.
\begin{lem}[Convergence of singular points]\label{lem:stability-singular-doubleTOP}
    Let $u_k,u$ solve~\eqref{eq:doubletop-localised} in $B_1$ with obstacles $\varphi_k,\varphi\in C^{1,1}$.
    Assume that:
    \begin{itemize}
        \item[\textit{i)}] $u_k\to u, \varphi_k\to \varphi$ uniformly, and $\sup_{k}\|\varphi_k\|_{C^{1,1}((-1,1))}<+\infty$;
        \item[\textit{ii)}] there are points $x_k\in\Sing(u_k)$ with $x_k\to x_\infty\in B_{1/2}$. 
    \end{itemize}
        Then $x_\infty\in\Sing(u)$.
\end{lem}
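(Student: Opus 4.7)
The plan is to reduce the two-phase stability question to the classical (one-phase) thin obstacle problem, where Lemma~\ref{lem:frequency-monotonicity} is already available. Since $x_\infty\in B_{1/2}$ excludes the endpoint case of $\Sing$, each $x_k\in\Sing(u_k)$ lies in $\Sing^+(u_k)\cup\Sing^-(u_k)$ for $k$ large; extracting a subsequence, I may assume $x_k\in\Sing^+(u_k)$ for every $k$ (the opposite case is symmetric upon replacing $(u_k,\varphi_k)$ by $(-u_k,-\varphi_k)$, which sends solutions of \eqref{eq:doubletop-localised} to solutions of the same equation with opposite obstacle).

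The main obstacle is to prove a \emph{uniform} phase-separation radius: there exists $\bar r>0$, independent of $k$, such that $u_k(\cdot,0)\ge\varphi_k$ throughout $(x_k-\bar r,x_k+\bar r)$ for every $k$ large. I would adapt Step~2 of the proof of Theorem~\ref{teo:generic_doubleTOP}. Suppose by contradiction that the positivity radii shrink to zero. Then one can find $y_k\to x_\infty$ with $u_k(y_k,0)<\varphi_k(y_k)$; moreover, since $x_k\in\Sing^+(u_k)$ lies on the free boundary $\partial\{u_k(\cdot,0)>\varphi_k\}$ of the underlying classical thin obstacle solution $u_k-\theta|x_2|$, one can also select $z_k\to x_\infty$ with $u_k(z_k,0)>\varphi_k(z_k)$. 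After translating $x_\infty$ to the origin and subtracting $\varphi(x_\infty)+x_1\varphi'(x_\infty)$ from both $u_k$ and $\varphi_k$, consider the rescaled functions $\tilde u_{k,r}(y)=u_k(ry)/r$ with obstacle $\tilde\varphi_{k,r}(y_1)=\varphi_k(ry_1)/r$. For $r$ small but fixed and $k$ large, the uniform $C^{1,1}$ bound on $\{\varphi_k\}$ handles (i) of Lemma~\ref{lem:barrier-phaseSep}; Lemma~\ref{lem:normalLip} together with the uniform $C^{1,1/2}$ regularity at contact points from Corollary~\ref{cor:optimal-regularity-doubleTop} (applied to the limit $u$ and transferred to $u_k$ by uniform convergence) give $|\tilde u_{k,r}|\le C\sqrt{r}\le\delta$ on $\{x_2=0\}\cap B_1$ and $|\partial_2\tilde u_{k,r}|\le\theta+\delta|x_2|$, yielding (ii); while $z_k,y_k\to x_\infty$ lie in a $r/100$-neighbourhood of $x_\infty$ for $k$ large, giving (iii). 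Applying Lemma~\ref{lem:barrier-phaseSep} both to $\tilde u_{k,r}$ and to $-\tilde u_{k,r}$ with $c=1/2$ produces the contradictory conclusions $\partial_2\tilde u_{k,r}\ge\theta/2$ and $\partial_2\tilde u_{k,r}\le-\theta/2$ on $[-1/2,1/2]\times\{1/100\}$.

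With the uniform radius $\bar r$ secured, the proof concludes quickly. In $B_{\bar r}(x_k)$ the function $v_k\coloneqq u_k-\theta|x_2|$ solves the classical thin obstacle problem \eqref{eq:top} with obstacle $\varphi_k$, and $x_k\in\Sing(v_k)$ in the sense of \eqref{eq:sing-points}. Passing $u_k\ge\varphi_k$ to the limit yields $u\ge\varphi$ on $(x_\infty-\bar r,x_\infty+\bar r)$, so $v\coloneqq u-\theta|x_2|$ solves \eqref{eq:top} on $B_{\bar r/2}(x_\infty)$ with the same obstacle $\varphi$, with $v_k\to v$ and $\varphi_k\to\varphi$ uniformly and $\sup_k\|\varphi_k\|_{C^{1,1}}<+\infty$. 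Applying Lemma~\ref{lem:frequency-monotonicity} to this sequence (after a harmless translation and rescaling back to $B_1$) gives $x_\infty\in\Sing(v)$, that is, $x_\infty\in\Sing^+(u)\subset\Sing(u)$, as required.
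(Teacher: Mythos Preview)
Your proposal is correct and follows essentially the same approach as the paper: reduce to the one-phase problem by establishing a uniform phase-separation radius via Lemma~\ref{lem:barrier-phaseSep} (the paper simply cites ``arguing as in Step~2 of the proof of Theorem~\ref{teo:generic_doubleTOP}'' for this), pass the inequality $u_k\ge\varphi_k$ to the limit, and then invoke Lemma~\ref{lem:frequency-monotonicity}. The only cosmetic slip is that to verify hypothesis~(i) of Lemma~\ref{lem:barrier-phaseSep} exactly you should subtract the tangent line of $\varphi_k$ (not of $\varphi$) before rescaling; since $\varphi_k\to\varphi$ in $C^1$ this does not affect the argument.
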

\begin{proof}
    We can assume without loss of generality that $x_k\in\Sing^+(u_k)$.
    It follows from Corollary~\ref{cor:optimal-regularity-doubleTop} and \eqref{eq:observation singular points} that, for $k$ large enough, the functions $u_k-\theta |x_2|$ and $u-\theta|x_2|$ solve~\eqref{eq:top} in $B_{\bar r}(x)$ for some $\bar r$ independent of $k$. We conclude using Lemma~\ref{lem:frequency-monotonicity}.
\end{proof}

\begin{proof}[Proof of Theorem~\ref{teo:generic_doubleTOP}]
    We first show that for each point $a\in\{\pm1\}$ there is at most one value of $t$ such that $u_t(a,0)=\varphi(a)$.
    We argue by contradiction, assuming that there are two values $t>t'$ such that $u_t(a,0)=u_{t'}(a,0)=\varphi(a)$.
    We can assume $a=0$ and $K=[-2,0]$. Since $u_t\ge u_{t'}$ and both are harmonic in $\R^2\setminus K$, by the strong maximum principle the function $w\coloneqq u_t-u_{t'}$ solves
    \[\begin{cases}
        \Delta w = 0    &\text{in }B_\delta\setminus\{x_2=0,x_1\le0\},\\
        w\ge0           &\text{in }B_\delta,\\
        w > 0           &\text{in }B_\delta\setminus\{x_2=0,x_1\le0\}\\
    \end{cases}\]
    and $w(0,0)=0$. It follows that there is $\eps>0$ sufficiently small such that $w\ge \eps U_0$ in $B_{\delta/4}$, where $U_0(\rho,\theta)=\rho^{1/2}\cos\theta/2$.
    On the other hand, Lemma~\ref{lem:normal Lip} implies $w(0,x_2)\le C x_2$ for $x_2$ small enough.
    This implies $|x_2|^{1/2} \sim U_0(0,x_2)\le Cx_2$ for $x_2$ small enough, which is a contradiction.

    Thus, it is sufficient to show the result assuming that $\bigcup_t\{u_t(\cdot,0)=\varphi\}$ is compactly supported inside $(-1,1)$ and that $\Sing(u_t)=\Sing^+(u_t)$.
    Let $t_0\in \{t : \Sing^+(u_t)\neq\emptyset\}$ be a point with density 1. Given $\bar r>0$, by Lemma~\ref{lem:stability-singular-doubleTOP} there is $\delta=\delta(\bar r)>0$ such that if $|s-t_0|<\delta$ then $\Sing^+(u_s)$ is contained in a $\bar r/2$-neighbourhood of $\Sing^+(u_{t_0})$.
    As $\Sing^+(u_{t_0})$ is compact and $t_0$ has density 1, we can find $x_0\in\Sing^+(u_{t_0})$ such that
    \[
        \cH^1\big(\{s\in(t_0-\delta,t_0+\delta): \Sing^+(u_s)\cap(x_0-\bar r/2,x_0+\bar r/2)\neq\emptyset\}\big)>0.
    \]
    By Corollary~\ref{cor:optimal-regularity-doubleTop} the family $(u_t)$ is continuous in the $C^1_\loc$ topology, so up to making $\bar r,\delta$ smaller we can assume that $u_s(\cdot,0)\ge0$ in $(x_0-\bar r,x_0+\bar r)$ for $s>t_0-\delta$.
    We deduce that the functions $v_s\coloneqq u_s-\theta |x_2|$ satisfy the assumptions of Theorem~\ref{teo:genericTOP} in $B_{\bar r}(x_0)$, reaching a contradiction.
\end{proof}

\section{Riesz potentials}\label{app:riesz case}

Given an integer $d\ge1$ and a potential $V\colon\R^d\to[-\infty,+\infty)$, our strategy also applies to minimizers of the energies
\[\cE(\mu)=\iint_{\R^d\times\R^d}\mathfrak g(x-y)\,d\mu(x)\,d\mu(y) + 2\int_{\R^d}V(x)\,d\mu(x),\]
where $\mathfrak g$ is as in \eqref{eq:g}.
Indeed, existence and uniqueness of minimizing measures hold under mild assumptions on $V$, see~\cite[Chapter 2]{serfaty2024lectures}.
Also, similarly to Section~\ref{sect:min to thin} (cp. \cite[Proposition 2.15 and Section 2.4]{serfaty2024lectures}), given $\mu_V$ the minimizing measure we can associate the function
\[
    h^{\mu_V}(x)\coloneqq \mathfrak g*\mu_V(x),\qquad x\in\R^d.
\]
Then, there is a constant $c\in\R$ such that the function $u^{\mu_V}\coloneqq h^{\mu_V}-c$ solves
\begin{equation}\label{eq:fractional_obstacle}\begin{cases}
    u^{\mu_V}\ge -V &\text{in }\R^d,\\
    u^{\mu_V}=-V    &\text{in }\supp(\mu_V),\\
    (-\Delta)^{\frac{d-\sigma}2}u^{\mu_V}= c_{d,\sigma}\mu_V   &\text{in }\R^d,
\end{cases}\end{equation}
for some constant $c_{d,\sigma}$. Furthermore, the converse is true: a probability measure $\mu$ is minimizing if there is a solution $u^\mu$ of~\eqref{eq:fractional_obstacle} with $(-\Delta)^{\frac{d-\sigma}2}u^{\mu}= c_{d,\sigma}\mu$.

Also in this context, if $u^{\mu_V}$ is a regular solution of~\eqref{eq:fractional_obstacle} (in a sense analogous to Definition~\ref{defn:regular-solution-top}) then $\mu_V$ satisfies condition \textbf{($\mathbf{A_\sigma}$)}. Thus, thanks to~\cite[Theorem 1.1]{CarducciColombo} (see also~\cite{FernandezRealRosOton,FERNANDEZREAL2023109323}), the same strategy adopted to prove Theorem~\ref{teo:generic_continuousCase} yields the following:
\begin{teo}\label{teo:generic_riesz}
    Given an integer $1\le d\le3$, a real number $\sigma \in (d-2,d)$, a potential $V\in C^{4,\alpha}_\loc(\R^d)$ for some $\alpha\in(a^-,1]$ where $a=1+\sigma-d$, and $\gamma\in\R$, consider the family $V_{\gamma,s}:=s^{\gamma \sigma-1}V(s^\gamma\,\cdot)$, $s>0$, and let $\mu_{V_{\gamma,s}}$ denote the associated minimizing probabilities.
    Then ${V_{\gamma,s}}$ is regular for a.e.  $s>0$.
\end{teo}

\begin{oss}
Thanks to~\cite{jhaveri2017higher,Abatangelo2020}, higher regularity of the potential $V$ yields higher regularity on the density of $\mu_{V_{\gamma,s}}$ and the geometry of the support. More precisely, if $V$ is a regular potential of class $C^{k+{\scriptstyle\frac{d-\sigma}{2}}+\alpha}(\R^d)$ for $k>2$ and $\alpha\in(0,1)$ with $\alpha\pm {\scriptstyle\frac{d-\sigma}{2}}\not\in\N$,\footnote{As in the introduction, we denote
 $$C_{\rm loc}^{k+\frac{d-\sigma}{2}+\beta}(\R^d)=
 \left\{
 \begin{array}{ll}
 C_{\rm loc}^{k,\beta+\frac{d-\sigma}{2}}(\R^d)&\text{if }\beta \leq 1-\frac{d-\sigma}{2},\\
 C_{\rm loc}^{k+1,\beta-\frac{d-\sigma}{2}}(\R^d)&\text{if }\beta > 1-\frac{d-\sigma}{2}.
 \end{array}
 \right.
 $$}
 then
$\mu_{V}$ is supported over finitely many disjoint compact sets 
$\{K_j\}_{1\leq j \leq M}\subset \R^d$, with $\partial K_j$ a $(d-1)$-dimensional manifold of class $C^{k,\alpha}$. Also, 
the function $Q_V$ in \eqref{eq:form-regular-potential sigma} is of class $C^{k-1,\alpha}(K_j)$ in a neighborhood of $\partial K_j$.
\end{oss}

\bibliographystyle{aomalpha}
\bibliography{Bibl}
\end{document}